\pgfplotsset{compat=newest}
\DeclareMathOperator{\dv}{div}
\DeclareMathOperator{\Tr}{Tr}
\DeclareMathOperator{\supp}{supp}
\DeclareMathOperator{\Hess}{Hess}
\begin{document}

\title{High-Order Approximation of Gaussian Curvature with Regge Finite Elements\thanks{The author was supported in part by the NSF under grant DMS-1703719.}}

\author{Evan S. Gawlik\thanks{Department of Mathematics, University of Hawaii at Manoa
    (\email{egawlik@hawaii.edu})}}

\date{}

\headers{Gaussian Curvature with Regge Finite Elements}{E. S. Gawlik}

\maketitle

\begin{abstract}
A widely used approximation of the Gaussian curvature on a triangulated surface is the angle defect, which measures the deviation between $2\pi$ and the sum of the angles between neighboring edges emanating from a common vertex.  We show that the linearization of the angle defect about an arbitrary piecewise constant Regge metric is related to the classical Hellan-Herrmann-Johnson finite element discretization of the $\dv \dv$ operator.  Integrating this relation leads to an integral formula for the angle defect which is well-suited for analysis and generalizes naturally to higher order.  We prove error estimates for these high-order approximations of the Gaussian curvature in $H^k$-Sobolev norms of integer order $k \ge -1$.
\end{abstract}

\begin{keywords}
Regge finite element, angle defect, Hellan-Herrmann-Johnson, Gaussian curvature, Ricci scalar, Regge calculus, Riemannian metric, scalar curvature
\end{keywords}
\begin{AMS}
65N30, 65N15, 65D18, 83C27
\end{AMS}

\section{Introduction} \label{sec:intro}

One of the most widely used approximations of the Gaussian curvature on a triangulated surface is the angle defect: $2\pi$ minus the sum of the angles between neighboring edges emanating from a common vertex. This approximation (and its generalization to higher dimensions) is used in several applications, including discrete analogues of Ricci flow~\cite{chow2003combinatorial,jin2008discrete}, discrete theories of relativity~\cite{regge1961general,alsing2011simplicial}, discrete differential geometry~\cite{glickenstein2011discrete,sullivan2008curvatures}, and computer graphics algorithms~\cite{kharevych2006discrete,crane2010trivial,springborn2008conformal}.  
Despite its widespread use, the angle defect leaves much to be desired if one is interested in accurately approximating the curvature of a smooth surface (or smooth Riemannian manifold) with a discretization thereof.  It is manifestly a low-order approximation of the curvature, relying in essence on piecewise constant approximations of the underlying smooth metric tensor.   

In this paper, we introduce and analyze a family of high-order approximations of the Gaussian curvature using piecewise polynomial approximations of the metric tensor.  The cornerstone of our construction is an integral formula for the angle defect that mimics a certain integral formula for the Gaussian curvature which is valid in the smooth setting.  In the discrete setting, the integral formula follows from the observation that the linearization of the angle defect about an arbitrary piecewise constant metric (more precisely, a piecewise constant Regge metric) is related to the classical Hellan-Herrmann-Johnson finite element discretization of the $\dv \dv$ operator.  This observation generalizes one made by Christiansen~\cite{christiansen2011linearization}, who derived the linearization of the angle defect about the Euclidean metric and related it to the jumps in the tangential-normal components of the metric perturbation (see Remark~\ref{remark:christiansen} for more details). 

To generalize the angle defect to higher order, we rely on the Regge finite element spaces recently developed by Li~\cite{li2018regge}, which have their origins in the work of Christiansen~\cite{christiansen2011linearization}.  These finite element spaces consist of piecewise polynomial $(0,2)$-tensor fields with continuous tangential-tangential components across element interfaces.  In the lowest order setting, a positive definite Regge finite element realizes a piecewise flat triangulation whose (squared) edge lengths correspond to the degrees of freedom for the finite element space.


\begin{figure}
\centering
\subfloat[]{\label{fig:kappa:a}\includegraphics[scale=0.18,trim=1.6in 0.6in 1.3in 0.4in,clip=true]{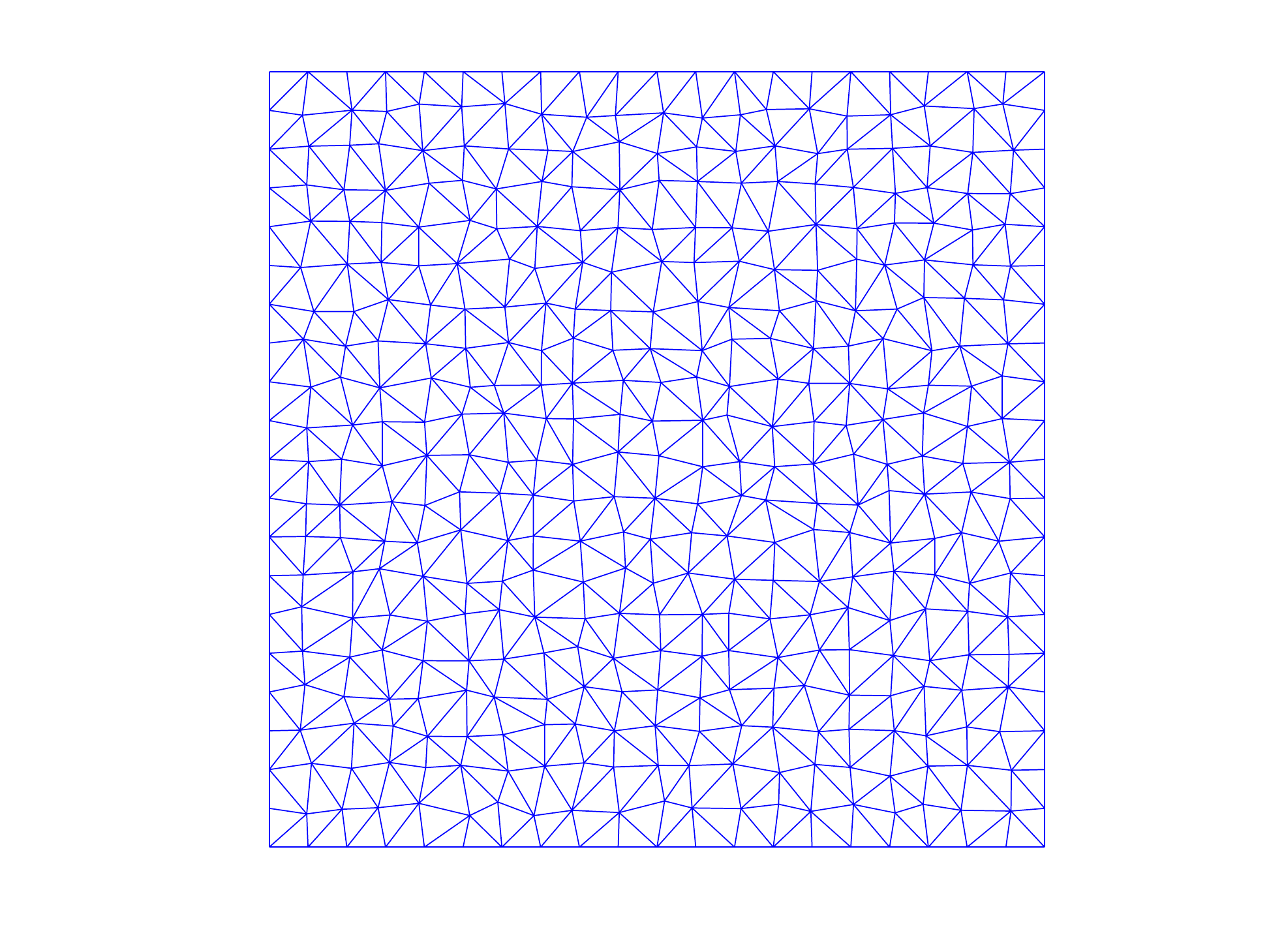}} \hspace{0.05in}
\subfloat[$r=0$]{\label{fig:kappa:b}\includegraphics[scale=0.22,trim=1.0in 0.9in 0.7in 1.5in,clip=true]{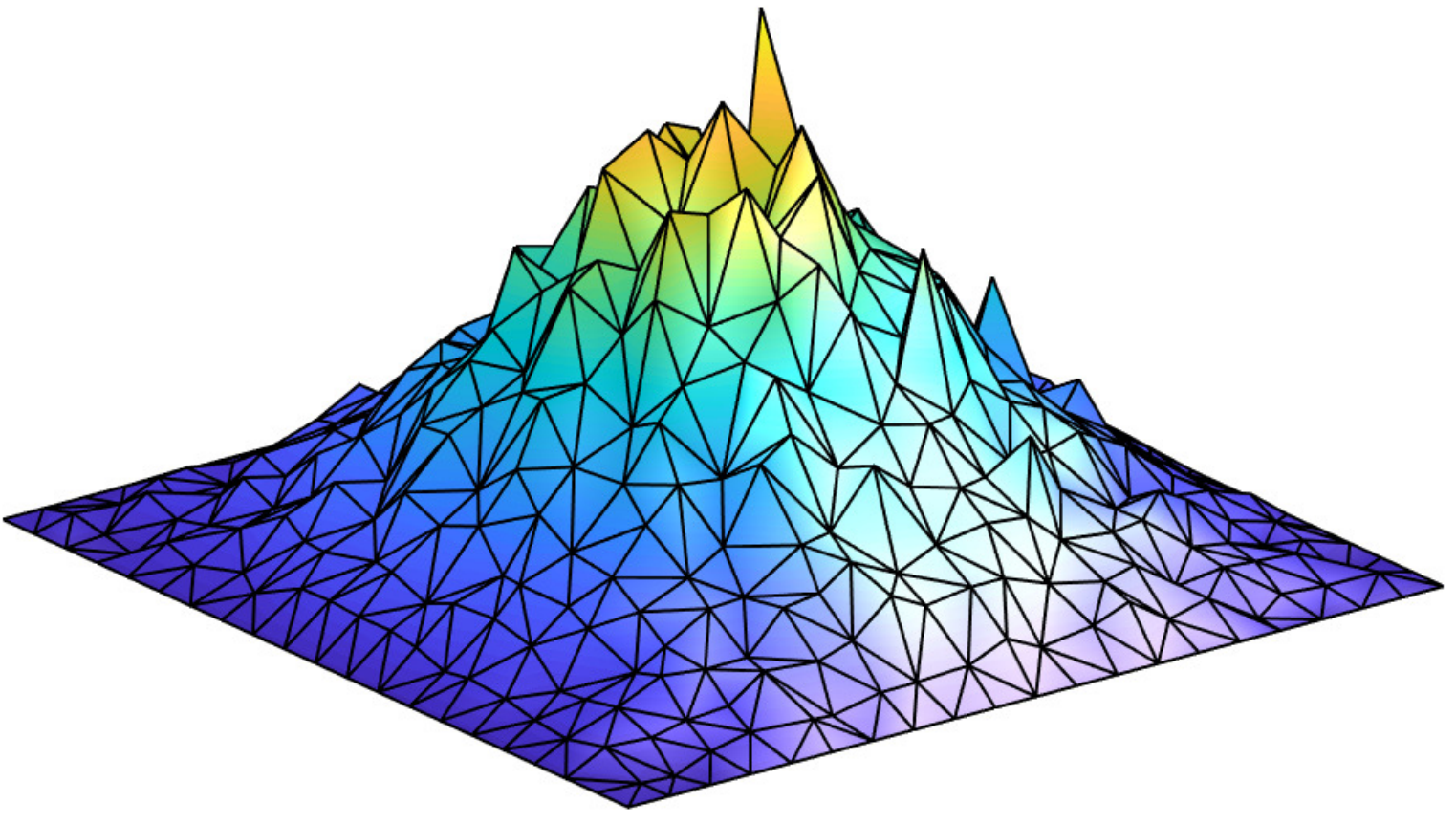}} 
\subfloat[$r=1$]{\label{fig:kappa:c}\includegraphics[scale=0.22,trim=1.0in 0.9in 0.7in 1.3in,clip=true]{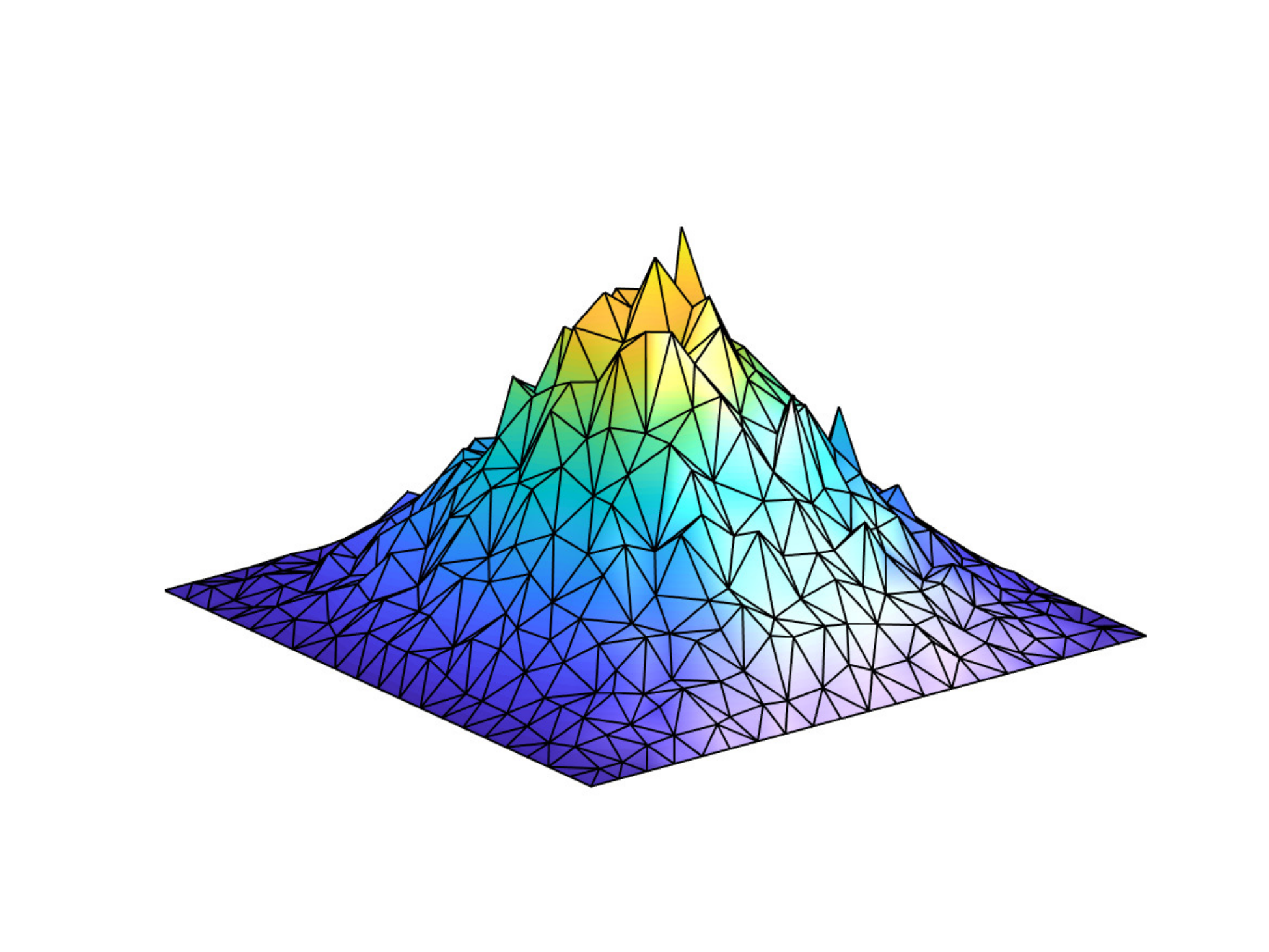}}
\subfloat[$r=2$]{\label{fig:kappa:d}\includegraphics[scale=0.22,trim=1.0in 0.5in 0.7in 1.5in,clip=true]{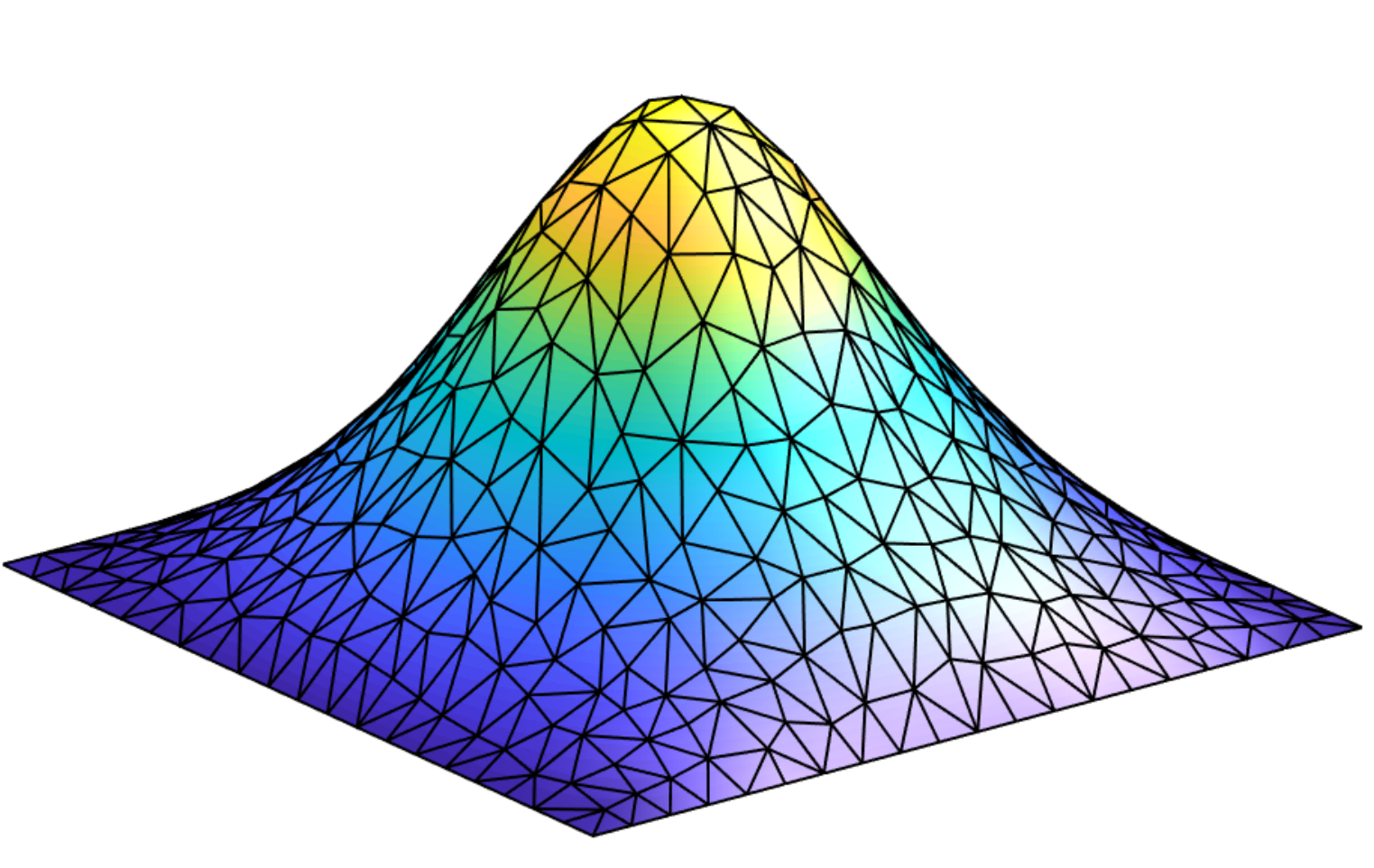}}
\caption{Approximate Gaussian curvature of the metric~(\ref{gexact}), computed using piecewise polynomial approximations of $g$ of degree $r=0,1,2$ on the triangulation of $(-1,1) \times (-1,1)$ depicted in (a).}
\label{fig:kappa}
\end{figure}

The advantages of high-order approximation of the Gaussian curvature are easily illustrated with an example.  Consider the square $\Omega = (-1,1) \times (-1,1)$ equipped with the Riemannian metric
\begin{equation} \label{gexact}
g(x,y) = \begin{pmatrix} 1 + \left( \frac{\partial f}{\partial x} \right)^2 & \frac{\partial f}{\partial x} \frac{\partial f}{\partial y} \\ \frac{\partial f}{\partial x} \frac{\partial f}{\partial y} & 1 + \left( \frac{\partial f}{\partial y} \right)^2 \end{pmatrix},
\end{equation}
where $f(x,y) = \frac{1}{2}x^2 - \frac{1}{12}x^4 +  \frac{1}{2}y^2 - \frac{1}{12}y^4$.
This is nothing more than the induced metric for the surface $z = f(x,y)$ in $\mathbb{R}^3$.  The exact Gaussian curvature of $g$ is
\[
\kappa(g)(x,y) = \frac{81 (1-x^2) (1-y^2) }{ \left( 9 + x^2 (x^2-3)^2 + y^2 (y^2-3)^2 \right)^2}.
\]
Figure~\ref{fig:kappa} plots the approximate Gaussian curvature of $g$, as computed using Definition~\ref{def:kappah} below with piecewise polynomial approximations of $g$ of degree $r=0,1,2$ on the triangulation of $\Omega$ depicted in Figure~\ref{fig:kappa:a}.  As we will show later in this paper, the approximate curvature produced by Definition~\ref{def:kappah} in the case $r=0$ is precisely the angle defect, normalized by the consistent mass matrix for piecewise linear finite elements.  
(In the interest of fairness, we normalized by a lumped mass matrix to produce Figure~\ref{fig:kappa:b}; the appearance of Figure~\ref{fig:kappa:b} worsens if the consistent mass matrix is used.)
Notice that the results for $r=0$ and $r=1$ are not particularly satisfactory.
This should come as no surprise; one expects the second derivatives of a degree-$r$ polynomial approximation of $g$ to converge in $W^{r-2,p}$-Sobolev norms under mesh refinement, but not in stronger norms.  A chief goal of this paper is to verify this intuition with an error estimate.
 
Our error analysis complements a number of related results in the literature on scalar curvature approximation.  
Cheeger, M{\"u}ller, and Schrader~\cite{cheeger1984curvature} prove that the angle defect converges in the sense of measures to the (densitized) scalar curvature if a smooth Riemannian manifold (not necessarily of dimension 2) is approximated with a suitable sequence of triangulations.  Christiansen~\cite{christiansen2013exact} proves a dual result: if a piecewise constant metric is approximated with a sequence of mollifications thereof, then the exact (densitized) scalar curvature of the mollified metric converges in the sense of measures to the angle defect.  Other analyses of the angle defect appear in~\cite{borrelli2003angular,xu2006convergence} and the references therein; many of these analyses are guided by Taylor expansions on parametrized surfaces and impose special conditions on the triangulation.  Our error analysis will focus not on the angle defect but instead on its higher order generalizations.

There appear to be relatively few studies on Gaussian curvature approximation that offer quantitative error bounds in Sobolev norms like the bounds in Theorem~\ref{thm:err} below.  Fritz~\cite{fritz2013isoparametric} has proven bounds of this type for a curvature approximation which uses isoparametric approximations of surfaces embedded in $\mathbb{R}^3$.  His results apply more generally to Ricci tensor approximation on hypersurfaces, and have been used to discretize Ricci flow~\cite{fritz2015numerical}.  However, they are inapplicable if the manifold does not admit a codimension-one embedding into Euclidean space.  Note that studies related to \emph{mean} curvature approximation are more widespread; see, for instance,~\cite{hildebrandt2006convergence,bansch2004surface,dziuk2006finite,kovacs2018convergent}.

This paper is organized as follows.  In Section~\ref{sec:smooth}, we introduce our notation and point out an integral formula for the Gaussian curvature.  In Section~\ref{sec:discretization}, we discretize this integral formula with the aid of Regge finite elements, and we show that it reduces to the angle defect in the lowest-order setting.  We state and prove error estimates for the aforementioned discretization in Section~\ref{sec:error}.  We conclude with numerical examples in Section~\ref{sec:numerical}.

For simplicity, we perform much of the forthcoming analysis on a triangulated polygonal domain in $\mathbb{R}^2$ equipped with a non-Euclidean metric.  Note, however, that the curvature approximations we introduce are coordinate-free and can be readily applied to two-dimensional orientable simplicial complexes with more general topology.  We refer the reader to~\cite{li2018regge} for the appropriate definitions of the Regge finite elements in this general setting.  Importantly, our curvature approximations do not rely on (nor assume the existence of) an embedding of the manifold under consideration in $\mathbb{R}^3$.

\section{Curvature in the Smooth Setting} \label{sec:smooth}

\subsection{Preliminaries} \label{sec:prelim}

Let $\Omega \subset \mathbb{R}^2$ be a polygonal domain.  We use standard notation for the Sobolev spaces $W^{s,p}(\Omega)$ of differentiability index $s \in \mathbb{R}$ and integrability index $p \in [1,\infty]$.  We denote $H^s(\Omega) = W^{s,2}(\Omega)$ and $H^1_0(\Omega) = \{ v \in H^1(\Omega) \mid \left. v\right|_{\partial\Omega} = 0 \}$.  

Let $g$ be a smooth Riemannian metric on $\Omega$.  Fix coordinates $(x^1,x^2)$ on $\mathbb{R}^2$ so that $g$ may be regarded as a map from $\Omega$ to $\mathbb{S} = \{ \sigma \in \mathbb{R}^{2 \times 2} \mid \sigma = \sigma^T\}$.  If $v$ is a scalar field on $\Omega$, then we denote $\nabla v = \left( \frac{\partial v}{\partial x^1}, \frac{\partial v}{\partial x^2} \right)^T$ and $\nabla_g v = g^{-1} \nabla v$.  If $w$ is a vector field on $\Omega$, then we regard it as a column vector and denote $\dv w = \frac{\partial w^1}{\partial x^1} +  \frac{\partial w^2}{\partial x^2}$ and $\dv_g w = \frac{1}{\sqrt{\det g}} \dv(\sqrt{\det g}\, w)$.  

The Riemannian Hessian of a scalar field $v$ is denoted $\Hess_g v$.  In coordinates,
\[
(\Hess_g v)_{ij} =  \frac{\partial^2 v}{\partial x^i \partial x^j} - \Gamma^k_{ij} \frac{\partial v}{\partial x^k}.
\]
Here, the Einstein summation convention is adopted, and $\Gamma^k_{ij}$ are the Christoffel symbols of the second kind.  That is,
\[
\Gamma^k_{ij} = \frac{1}{2} g^{k\ell} \left( \frac{\partial g_{\ell i}}{\partial x^j} + \frac{\partial g_{\ell j}}{\partial x^i} - \frac{\partial g_{ij}}{\partial x^\ell}  \right),
\]
where $g_{ij}$ denotes the $(i,j)$-component of $g$, and $g^{ij}$ denotes the $(i,j)$-component of $g^{-1}$.  
The Laplacian of $v$ is 
\[
\Delta_g v = \dv_g \nabla_g v.
\]
If $\sigma$ is a symmetric $(0,2)$-tensor field on $\Omega$, then we regard it as a map from $\Omega$ to $\mathbb{S}$ and denote its components by $\sigma_{ij}$.  
In a slight abuse of notation, we denote by $\dv_g \sigma$ the vector field with components
\[
(\dv_g \sigma)^i = \frac{\partial \sigma^{ij}}{\partial x^j} + \Gamma^i_{jk} \sigma^{kj} + \Gamma^j_{jk}\sigma^{ik},
\]
where $\sigma^{ij} = g^{ik}\sigma_{k\ell}g^{\ell j}$.  We omit the subscript $g$ if $g$ is the Euclidean metric $\delta \equiv \left( \begin{smallmatrix} 1 & 0 \\ 0 & 1 \end{smallmatrix} \right)$.  Thus, $\dv \sigma = \dv_\delta \sigma$, $\Delta v = \Delta_\delta v$, etc.

The Gaussian curvature of $g$ is denoted $\kappa(g)$; it is half the scalar curvature $R(g)$:
\[
\kappa(g) = \frac{1}{2}R(g) = \frac{1}{2} g^{ij} \left( \frac{\partial \Gamma^k_{ij}}{\partial x^k} - \frac{\partial \Gamma^k_{ik}}{\partial x^j} +\Gamma^\ell_{ij} \Gamma^k_{k\ell} - \Gamma^\ell_{ik} \Gamma^k_{j\ell} \right).
\]
Because $\Omega$ is two-dimensional, the Ricci tensor $\operatorname{Ric}(g)$ is proportional to the metric:
\begin{equation} \label{Ric}
\operatorname{Ric}(g) = \kappa(g) g.
\end{equation}

\subsection{Linearization of the Curvature} \label{sec:lin}
Let $\mu(g) = \sqrt{\det g} \, dx$ be the volume form on $\Omega$ induced by $g$.  The linearization of $(\kappa \mu)(g) = \kappa(g)\mu(g)$ about $g$ will be of fundamental importance in this paper.  For any smooth $\sigma : \Omega \rightarrow \mathbb{S}$, we have
\begin{equation} \label{lin}
D(\kappa\mu)(g) \cdot \sigma = \frac{1}{2} (\dv_g \dv_g S_g \sigma) \mu(g),
\end{equation}
where
\begin{equation} \label{Sg}
S_g \sigma = \sigma - g \Tr(g^{-1}\sigma).
\end{equation}
This can be seen by combining the well-known relations~(\cite[Lemma 2]{fischer1975deformations},~\cite[Equation 2.11]{chow2006hamilton})
\begin{align*}
D\kappa(g) \cdot \sigma &= \frac{1}{2} \left( \dv_g \dv_g \sigma - \Delta_g \Tr(g^{-1} \sigma) - \Tr(g^{-1}\sigma g^{-1} \operatorname{Ric}(g)) \right),  \\
D\mu(g) \cdot \sigma &= \frac{1}{2} \Tr(g^{-1}\sigma) \mu(g),
\end{align*}
with~(\ref{Ric}), noting that $\dv_g \dv_g (gv) = \Delta_g v$ for any scalar field $v$.

Another way of writing~(\ref{lin}) is as follows.  Let 
\[
\langle u, v \rangle_g = \int_\Omega u v \, \mu(g)
\]
be the $L^2$-inner product on $\Omega$ induced by $g$.  Then, for any scalar function $v$,
\begin{equation} \label{linint}
\left.\frac{d}{dt}\right|_{t=0} \langle \kappa(g+t\sigma), v \rangle_{g+t\sigma} = \frac{1}{2} \langle \dv_g \dv_g S_g \sigma, v \rangle_g.
\end{equation}
Since the Euclidean metric $\delta$ has zero curvature, integrating the above relation leads to the integral formula
\begin{equation} \label{int}
\langle \kappa(g), v \rangle_g = \frac{1}{2} \int_0^1 b((1-t)\delta+tg; g-\delta, v) \, dt,
\end{equation}
where
\[
b(g;\sigma,v) = \langle \dv_g \dv_g S_g \sigma, v \rangle_g.
\]
Our strategy for discretizing $\kappa(g)$ will be to discretize the integral formula~(\ref{int}).

\section{Discretization} \label{sec:discretization}

Let $\{\mathcal{T}_h\}_{h>0}$ be a shape-regular, quasi-uniform family of triangulations of $\Omega$ parametrized by $h = \max_{K \in \mathcal{T}_h} h_K$, where $h_K = \operatorname{diam}(K)$ denotes the diameter of a triangle $K$.  In other words, there are constants $C_1$ and $C_2$ such that for every $h$,
\begin{align}
\max_{K \in \mathcal{T}_h} \frac{h_K}{\rho_K} &\le C_1, \\
\max_{K \in \mathcal{T}_h} \frac{h}{h_K} &\le C_2, 
\end{align}
where $\rho_K$ denotes the inradius of $K$.

Let $e \subset \partial K$ be an edge of a triangle $K \in \mathcal{T}_h$.  The outward unit normal vector to $K$ along $e$ relative to the Euclidean metric is denoted $n$, and the unit tangent vector relative to the Euclidean metric is $\tau = -Jn$, where $J = \left( \begin{smallmatrix} 0 & 1 \\ -1 & 0 \end{smallmatrix} \right)$.  Relative to $g$, the unit tangent and normal vectors are
\[
\tau_g = \frac{1}{\sqrt{\tau^T g \tau}} \tau, \quad n_g = \frac{J g \tau}{\sqrt{\tau^T g \tau}{\sqrt{\det g}}}. 
\]
One checks that $\tau_g^T g \tau_g = n_g^T g n_g = 1$ and $\tau_g^T g n_g = 0$ since $gJg = (\det g)J$.  We also note that if $\sigma : \Omega \rightarrow \mathbb{S}$, then
\begin{equation} \label{nSn}
n_g^T (S_g \sigma) n_g = -\tau_g^T \sigma \tau_g,
\end{equation}
owing to the definition~(\ref{Sg}) of $S_g$ and the identity $\tau_g \tau_g^T + n_g n_g^T = g^{-1}$.

Let $\mathcal{E}_h$ denote the set of edges of triangles in $\mathcal{T}_h$, and let $\mathring{\mathcal{E}}_h$ denote the set of interior edges; these are the edges $e \in \mathcal{E}_h$ with $e \not\subset \partial \Omega$.  Let $v$ be a scalar field.  Along any edge $e = K_1 \cap K_2 \in \mathring{\mathcal{E}}_h$, we denote 
\[
\left\llbracket v \right\rrbracket = \left. v \right|_{\partial K_1} + \left. v \right|_{\partial K_2}.
\]
If $e \in \mathcal{E}_h$ is on the boundary of $\Omega$, we denote
\[
\left\llbracket v \right\rrbracket = \left.v\right|_{\partial\Omega}.
\]

Let
\[
V = \{ v \in H^1_0(\Omega) \mid \left.v\right|_K \in H^2(K), \, \forall K \in \mathcal{T}_h \}
\]
and
\begin{equation*}
\begin{split}
\Sigma = \{ \sigma \in L^2(\Omega) \otimes \mathbb{S} \mid \left.\sigma\right|_K \in H^1(K) \otimes \mathbb{S}, \, \forall K \in \mathcal{T}_h, \text{ and } \\ \tau^T \sigma \tau \text{ is continuous across every } e \in \mathring{\mathcal{E}}_h \}.
\end{split}
\end{equation*}
Note that $V$ and $\Sigma$ depend on $h$, but we have omitted a subscript $h$ to emphasize that they are infinite-dimensional spaces.  We define a metric-dependent bilinear form $b_h(g; \cdot, \cdot) : \Sigma \times V \rightarrow \mathbb{R}$ by
\begin{align*}
b_h(g;\sigma,v) 
&= \sum_{K \in \mathcal{T}_h} \langle S_g \sigma, \Hess_g v \rangle_{g,K} + \sum_{e \in \mathcal{E}_h} \left\langle \tau_g^T \sigma \tau_g, \left\llbracket \frac{\partial v}{\partial n_g} \right\rrbracket \right\rangle_{g,e},
\end{align*}
where
\begin{align*}
\langle u, v \rangle_{g,e} &= \int_e u v  \sqrt{\tau^T g \tau} \, d\ell, \\
\langle \sigma, \rho \rangle_{g,K} &= \int_K \Tr(g^{-1}\sigma g^{-1} \rho) \, \mu(g),
\end{align*}
and
\[
\frac{\partial v}{\partial n_g} = n_g ^T g \nabla_g v.
\]
The bilinear form $b_h$ is well-studied.  In view of~(\ref{nSn}), it is (up to the appearance of $S_g$) a non-Euclidean generalization of the bilinear form used to discretize the $\dv \dv$ operator in the classical Hellan-Herrmann-Johnson mixed discretization of the biharmonic equation~\cite{babuska1980analysis,arnold1985mixed,brezzi1977mixed}.  See~\cite[Section 4.2]{li2018regge} for more insight into this connection in the Euclidean setting.

Now let $q \in \mathbb{N}$ and $r \in \mathbb{N}_0$, and define finite-dimensional subspaces
\[
V_h = \{v \in V \mid \left. v \right|_K \in \mathcal{P}_q(K), \, \forall K \in \mathcal{T}_h \}
\]
and
\[
\Sigma_h = \{\sigma \in \Sigma \mid \left. \sigma \right|_K \in \mathcal{P}_r(K) \otimes \mathbb{S}, \, \forall K \in \mathcal{T}_h \},
\]
where $\mathcal{P}_r(K)$ denotes the space of polynomials of degree $\le r$ on $K$.
The space $\Sigma_h$ is the space of \emph{Regge finite elements} of degree $r$~\cite{li2018regge,christiansen2011linearization}.

Of particular importance to us will be the space of positive definite Regge finite elements,
\[
\Sigma_{h+} = \{ \sigma \in \Sigma_h \mid \sigma(x) \succ 0, \, \forall x \in \mathring{K}, \, \forall K \in \mathcal{T}_h \}.
\]
We define the discrete Gaussian curvature of a metric $g_h \in \Sigma_{h+}$ as follows.

\begin{definition} \label{def:kappah}
Let $q \in \mathbb{N}$, $r \in \mathbb{N}_0$, and $g_h \in \Sigma_{h+}$.  The \emph{discrete Gaussian curvature} of $g_h$ is the unique function $\kappa_h(g_h) \in V_h$ satisfying
\begin{equation} \label{integralh}
\langle \kappa_h(g_h), v_h \rangle_{g_h} = \frac{1}{2} \int_0^1 b_h( (1-t)\delta + t g_h; g_h - \delta, v_h) \, dt, \quad \forall v_h \in V_h.
\end{equation}
\end{definition}

Note that for each $v_h \in V_h$, the value of $\langle \kappa_h(g_h), v_h \rangle_{g_h}$ depends only on the values of $g_h$ in $\supp(v_h)$.  Thus, Definition~\ref{def:kappah} extends readily to orientable triangulations with more general topology: for each function $v_h$ in the canonical basis for $V_h$, one computes the spatial integral in~(\ref{integralh}) over $\supp(v_h)$, which (generically) is a patch of triangles admitting a Euclidean metric.
 
One of our main reasons for favoring this definition is that when $(r,q)=(0,1)$, $\kappa_h(g_h)$ reduces to the popular angle defect.  
In detail, let $\{y^{(i)}\}_{i=1}^N \subset \Omega$ be the vertices of $\mathcal{T}_h$, enumerated in such a way that $y^{(i)} \notin \partial \Omega$ if and only if $1 \le i \le N_0<N$.   Let $\{\phi_i\}_{i=1}^{N_0}$ be the basis for $V_h$ defined by
\[
\phi_i(y^{(j)}) = \begin{cases} 1, &\mbox{ if } i=j, \\ 0, &\mbox{ if } i \neq j, \end{cases} \quad i=1,2,\dots,N_0, \, j=1,2,\dots,N.
\]

\begin{theorem}
Let $r=0$, $q=1$, and $g_h \in \Sigma_{h+}$.  For every $i=1,2,\dots,N_0$, we have
\[
\langle \kappa_h(g_h), \phi_i \rangle_{g_h} = 2\pi - \sum_{K \in \omega_i} \theta_{iK},
\]
where $\omega_i$ is the set of triangles in $\mathcal{T}_h$ sharing vertex $i$, and $\theta_{iK}$ is the interior angle of $K$ at vertex $i$ as measured by $g_h$.
\end{theorem}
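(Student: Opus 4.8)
The plan is to evaluate the right-hand side of~(\ref{integralh}) directly and recognize it as a fundamental-theorem-of-calculus statement for the angle defect along the path $g(t) = (1-t)\delta + t g_h$. Throughout, fix an interior vertex $i$ and write $\sigma = g_h - \delta$, so that both $g(t)$ and $\sigma$ are constant on each $K \in \mathcal{T}_h$ (here $r=0$) and $\frac{d}{dt}g(t) = \sigma$. Let $\theta_{iK}(t)$ denote the interior angle at vertex $i$ in $K$ measured by $g(t)$, and set $\Theta_i(t) = 2\pi - \sum_{K \in \omega_i}\theta_{iK}(t)$. Since $\phi_i$ is piecewise linear ($q=1$) and $g(t)$ is piecewise constant, the Christoffel symbols of $g(t)$ vanish on each triangle, whence $\Hess_{g(t)}\phi_i = 0$ on every $K$. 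Consequently the volume term in $b_h$ vanishes identically and only the edge sum survives.

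First I would simplify this surviving edge sum. Because $\phi_i$ is linear and $g(t)$ is constant on each triangle, both $\tau_{g}^T\sigma\tau_{g}$ and $\llbracket \partial\phi_i/\partial n_{g}\rrbracket$ are constant along each edge, so each edge integral is a product of scalars times the $g(t)$-length of the edge. Using $\partial\phi_i/\partial n_{g} = n_{g}^T\nabla\phi_i$ together with the tangential–tangential continuity built into $\Sigma$ (which makes $\tau^T\sigma\tau$, and hence the scalar factor attached to each edge, single-valued across interior edges), I would split each jump into its two one-sided normal derivatives and regroup the sum by triangle; only triangles in $\omega_i$ contribute, since $\phi_i$ vanishes outside its star and the boundary-jump convention selects the one-sided value on any rim edge lying in $\partial\Omega$. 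This rewrites $\frac{1}{2}b_h(g(t);\sigma,\phi_i) = \frac{1}{2}\sum_{K\in\omega_i} c_K(t)$, where $c_K(t)$ collects the one-sided contributions of the three edges of $K$, each being a squared-length derivative of the edge times a geometric factor coming from the one-sided $g(t)$-normal derivative of $\phi_i$.

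The crux is the purely local identity, on a single triangle $K$ with constant metric $g(t)$,
\[
\tfrac{1}{2}c_K(t) = -\frac{d}{dt}\theta_{iK}(t).
\]
To prove it I would express $\theta_{iK}(t)$ through the law of cosines from the three squared $g(t)$-lengths of the edges of $K$ (each affine in $t$, with $t$-derivative $\tau^T\sigma\tau$ scaled by the squared Euclidean edge length), differentiate in $t$, and compare with the explicit one-sided $g(t)$-normal derivatives of $\phi_i$ obtained from $n_{g(t)} = Jg(t)\tau/(\sqrt{\tau^T g(t)\tau}\sqrt{\det g(t)})$. Both sides are linear in $\sigma$ and, by the reduction above, depend on $\sigma|_K$ only through the three edge quantities $\tau^T\sigma\tau$, so it suffices to match the coefficient of each of these. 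This matching is the main obstacle: it is an elementary but delicate trigonometric computation on a triangle carrying a constant Riemannian metric, and it is exactly the generalization to an arbitrary base metric of Christiansen's linearization of the angle defect about the Euclidean metric.

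Granting the local identity, I would finish by the fundamental theorem of calculus. Summing over $K\in\omega_i$ gives $\frac{1}{2}b_h(g(t);\sigma,\phi_i) = \frac{d}{dt}\Theta_i(t)$, so
\[
\langle \kappa_h(g_h),\phi_i\rangle_{g_h} = \frac{1}{2}\int_0^1 b_h(g(t);\sigma,\phi_i)\,dt = \Theta_i(1) - \Theta_i(0).
\]
At $t=0$ the metric is Euclidean and the angles $\theta_{iK}(0)$ form the full fan of triangles about the interior vertex $i$, so $\sum_{K\in\omega_i}\theta_{iK}(0) = 2\pi$ and $\Theta_i(0) = 0$. Since $\Theta_i(1) = 2\pi - \sum_{K\in\omega_i}\theta_{iK}$, this yields the claimed formula.
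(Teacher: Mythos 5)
Your proposal is correct and follows essentially the same route as the paper: the local identity you isolate as the crux is precisely the paper's Lemma~\ref{thm:dkappah} (applied along the path $(1-t)\delta+tg_h$), and it is proved there exactly as you describe --- differentiate the law of cosines for $\theta_{iK}(t)$ and match, edge by edge, the resulting coefficients against the explicit one-sided values of $\partial\phi_i/\partial n_{g(t)}$ (namely $(a-b\cos\theta_{iK})/2A$, etc.) and the edge integrals of $\tau_{g}^T\sigma\tau_{g}$ (which equal twice the $t$-derivatives of the edge lengths) --- before integrating from the Euclidean metric, where the fan of angles at an interior vertex sums to $2\pi$. The only piece you leave unexecuted is that elementary matching computation, which goes through without surprises.
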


The preceding theorem is a consequence of the following identity that mimics~(\ref{linint}).

\begin{lemma} \label{thm:dkappah}
Let $r=0$, $q=1$, $g_h \in \Sigma_{h+}$, $\sigma_h \in \Sigma_h$, and $i \in \{1,2,\dots,N_0\}$.  For each $K \in \omega_i$ and $t$ sufficiently small, let $\theta_{iK}(t)$ be the interior angle of $K$ at vertex $i$ as measured by $g_h + t \sigma_h$.
Then
\begin{equation} \label{dkappah}
\left.\frac{d}{dt}\right|_{t=0} \left( 2\pi - \sum_{K \in \omega_i} \theta_{iK}(t) \right) = \frac{1}{2} b_h(g_h; \sigma_h, \phi_i).
\end{equation}
\end{lemma}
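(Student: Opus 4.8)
The plan is to compute both sides of~(\ref{dkappah}) explicitly in the piecewise-constant/piecewise-affine setting and match them triangle by triangle. First I would simplify the right-hand side. Since $r=0$, the metric $g_h$ is constant on each $K$, so all Christoffel symbols of $g_h$ vanish there and $\Hess_{g_h}\phi_i$ reduces to the ordinary Hessian of $\phi_i$; as $q=1$ makes $\phi_i$ affine on $K$, this Hessian is zero. Hence the volume term $\sum_K \langle S_{g_h}\sigma_h, \Hess_{g_h}\phi_i\rangle_{g_h,K}$ drops out and only the edge sum survives. I would then rewrite that edge sum as a sum over incident (triangle, edge) pairs, pairing each one-sided normal derivative $\frac{\partial \phi_i}{\partial n_{g_h}}$ with the metric of its own triangle; this is legitimate because $\sigma_h \in \Sigma_h$ has single-valued tangential-tangential component $\tau^T\sigma_h\tau$ along each edge, and because this reorganization treats interior edges (two sides) and boundary edges (one side) uniformly. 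Note that every integrand here is constant along each edge (piecewise constant $g_h,\sigma_h$ and affine $\phi_i$), so the edge integrals are elementary.

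Next I would localize. Both sides are linear in $\sigma_h$ and, after the above reduction, split into contributions from the individual triangles $K \in \omega_i$, so it suffices to prove the triangle-wise identity $-\,\tfrac{d}{dt}\theta_{iK} = \tfrac12 \sum_{e \subset \partial K}(\text{one-sided $e$ term})$. Writing $\ell_e$ for the $g_h$-length of $e$ and $A_K$ for the $g_h$-area of $K$, a short computation shows the one-sided $e$ term equals $\frac{1}{\ell_e}\big(\tfrac{d}{dt}\ell_e^2\big)\,\frac{\partial \phi_i}{\partial n_{g_h}}$, while $\sigma_h$ enters the angle derivative only through $\tfrac{d}{dt}\theta_{iK} = \sum_e (\partial \theta_{iK}/\partial \ell_e^2)\,\tfrac{d}{dt}\ell_e^2$. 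Because the three rates $\tfrac{d}{dt}\ell_e^2 = E_e^T \sigma_h E_e$ (with $E_e$ the edge vector) are independent as $\sigma_h|_K$ ranges over symmetric matrices, the identity reduces to the scalar relations $-\,\partial \theta_{iK}/\partial \ell_e^2 = \tfrac{1}{2\ell_e}\,\frac{\partial \phi_i}{\partial n_{g_h}}$, one for each edge $e$ of $K$.

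I would verify these scalar relations in two cases. For the edge $e$ opposite vertex $i$, differentiating the law of cosines $\cos\theta_{iK} = \frac{\ell_{[i,j]}^2 + \ell_{[i,k]}^2 - \ell_e^2}{2\ell_{[i,j]}\ell_{[i,k]}}$ gives $\partial\theta_{iK}/\partial \ell_e^2 = 1/(4A_K)$, while a direct computation gives $\frac{\partial \phi_i}{\partial n_{g_h}} = -\ell_e/(2A_K)$, since $\phi_i$ rises from $0$ on $e$ to $1$ at $i$ over the $g_h$-height $2A_K/\ell_e$ and the outward normal points away from $i$; the two match. For an edge $e$ incident to $i$ with second endpoint $j$, the same differentiation gives $\partial\theta_{iK}/\partial \ell_e^2 = -\ell_{\mathrm{opp}}\cos\theta_{jK}/(4\ell_e A_K)$, where $\ell_{\mathrm{opp}}$ is the $g_h$-length of the edge opposite $i$; evaluating $\frac{\partial \phi_i}{\partial n_{g_h}}$ in a $g_h$-orthonormal frame based at $j$, the outward $g_h$-normal to $e$ makes $g_h$-angle $\theta_{jK}$ with the (unit) gradient direction of $\phi_i$, so $\frac{\partial \phi_i}{\partial n_{g_h}} = \cos\theta_{jK}/h$ with $h = 2A_K/\ell_{\mathrm{opp}}$, and again the two coincide. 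Finally I would reassemble: summing the triangle-wise identities over $K \in \omega_i$, the one-sided terms on interior edges emanating from $i$ combine into the jump $\llbracket \frac{\partial \phi_i}{\partial n_{g_h}}\rrbracket$, whereas on edges opposite $i$ the neighboring triangle contributes nothing because $\phi_i$ vanishes on it, consistently with the boundary convention for $\llbracket\cdot\rrbracket$. This recovers $\tfrac12 b_h(g_h;\sigma_h,\phi_i)$.

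The main obstacle I anticipate is the adjacent-edge case. Unlike the opposite edge, $\phi_i$ does not vanish along an edge incident to $i$, so its normal derivative must be extracted by resolving $\nabla_{g_h}\phi_i$ against the $g_h$-orthonormal edge frame, and one must track carefully that this frame (hence $n_{g_h}$) is built from the Euclidean tangent $\tau$ yet measured with $g_h$. A secondary subtlety is the correct reading of the metric-weighted jump term when $g_h$ is discontinuous across $e$: the pairing must distribute the per-side factors $\sqrt{\tau^T g_h \tau}$ and $\frac{\partial \phi_i}{\partial n_{g_h}}$ consistently, which is precisely what makes the triangle-wise bookkeeping legitimate.
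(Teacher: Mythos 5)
Your proposal is correct and follows essentially the same route as the paper's proof: kill the Hessian term using piecewise-constant $g_h$ and affine $\phi_i$, differentiate the law of cosines to express $\dot\theta_{iK}$ in terms of the rates $\tfrac{d}{dt}\ell_e^2$, identify the edge integrals of $\tau_{g_h}^T\sigma_h\tau_{g_h}$ with those same rates, match against the geometrically computed normal derivatives of $\phi_i$ (your projection-formula identity $a-b\cos\theta_{iK}=\ell_{\mathrm{opp}}\cos\theta_{jK}$ reconciles your adjacent-edge formulas with the paper's), and reassemble the jumps over the star of vertex $i$. The per-edge polarization step, using the independence of the three functionals $\sigma_h|_K\mapsto E_e^T\sigma_h E_e$, is just a reorganization of the paper's direct term-by-term matching.
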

\begin{proof}
Let $K \in \omega_i$ be a triangle with edges $e_a$, $e_b$, and $e_c$ of length $a(t)$, $b(t)$, and $c(t)$ relative to $g_h+t\sigma_h$.  Assume that $e_c$ is opposite the angle $\theta_{iK}(t)$.  Differentiating the law of cosines
\[
a(t)^2+b(t)^2-c(t)^2 = 2a(t)b(t)\cos\theta_{iK}(t)
\]
with respect to $t$ at $t=0$ and solving for $\dot{\theta}_{iK} = \dot{\theta}_{iK}(0)$ gives
\[
\dot{\theta}_{iK} = \frac{-\dot{a} (a-b\cos\theta_{iK}) - \dot{b} (b-a\cos\theta_{iK}) + \dot{c} c}{2A},
\]
where $A = \frac{1}{2}ab\sin\theta_{iK}$ is the area of $K$ relative to $g_h$ and $a=a(0)$, $\dot{a}=\dot{a}(0)$, etc.  
On the other hand, we have
\begin{equation} \label{dvdnabc}
\left.\frac{\partial \phi_i}{\partial n_{g_h}}\right|_{e_a} =\frac{a-b\cos\theta_{iK}}{2A}, \quad \left.\frac{\partial \phi_i}{\partial n_{g_h}}\right|_{e_b} =\frac{b-a\cos\theta_{iK}}{2A}, \quad \left.\frac{\partial \phi_i}{\partial n_{g_h}}\right|_{e_c} = -\frac{c}{2A},
\end{equation}
and 
\begin{equation} \label{intabc}
\langle \tau_{g_h}^T \sigma_h \tau_{g_h}, 1 \rangle_{g_h,e_a} = 2\dot{a}, \quad \langle \tau_{g_h}^T \sigma_h \tau_{g_h}, 1 \rangle_{g_h,e_b} = 2\dot{b}, \quad \langle \tau_{g_h}^T \sigma_h \tau_{g_h}, 1 \rangle_{g_h,e_c} = 2\dot{c}.
\end{equation}
Indeed, the relations in~(\ref{dvdnabc}) follow from the fact that relative to $g_h$, $\nabla_{g_h} \phi_i$ is a vector of length $\frac{c}{2A}$ that is perpendicular to the edge $e_c$.  The relations in~(\ref{intabc}) follow from differentiating the identity
\begin{align*}
\langle \tau_{g_h}^T (g_h+t\sigma_h) \tau_{g_h}, 1 \rangle_{g_h,e_a}
&=  \left\langle \frac{1}{\tau^T g_h \tau} \tau^T(g_h + t\sigma_h) \tau, 1 \right\rangle_{g_h,e_a} \\
&=  \int_{e_a} \frac{\tau^T(g_h + t\sigma_h) \tau}{\sqrt{\tau^T g_h \tau}} \, d\ell \\
&= \frac{a(t)^2}{a(0)}
\end{align*}
and its counterpart for the edges $e_b$ and $e_c$.  Summing over all $K \in \omega_i$ and noting that $\phi_i=0$ on each $K \notin \omega_i$, we conclude that
\begin{equation} \label{thetadotjumps}
-\sum_{K \in \omega_i} \dot{\theta}_{iK} = \frac{1}{2} \sum_{e \in \mathcal{E}_h} \left \langle \tau_{g_h} \sigma_h \tau_{g_h}, \left\llbracket \frac{\partial \phi_i}{\partial n_{g_h}} \right\rrbracket \right\rangle_{g_h,e}.
\end{equation}
Since $\Hess_{g_h} \phi_i$ vanishes on each $K \in \mathcal{T}_h$, the relation~(\ref{dkappah}) follows.
\end{proof}

\begin{remark} \label{remark:christiansen}
Let us clarify the distinction between the preceding lemma and the results of Christiansen~\cite{christiansen2011linearization}. Christiansen works in three dimensions and computes the first- and second-order variation of the \emph{Regge action} $\sum_e \ell_e \kappa_e$ ($\kappa_e$ being the angle defect at an edge $e$, and $\ell_e$ being its length) around the Euclidean metric.  He notes that the first variation vanishes (a fact proven by Regge~\cite{regge1961general}), while the second variation is related to the distributional $\operatorname{curl} T \operatorname{curl}$ operator (which coincides with the operator $-\dv\dv S_\delta$ in two dimensions).  Along the way, he relates the linearization of $\kappa_e$ around the Euclidean metric to a summation of jumps of $\tau^T \sigma n$ (see~\cite[Proposition 2]{christiansen2011linearization}).  With some manipulation, this relation can be restated in the form of Theorem~\ref{thm:dkappah} with $g_h=\delta$.  Note that the case $g_h \neq \delta$ is not addressed in~\cite{christiansen2011linearization}; this missing ingredient plays a crucial role in our work.
\end{remark}

\section{Error Estimates} \label{sec:error}

In this section, we prove error estimates for $\kappa_h(g_h)-\kappa(g)$ in $H^k$-Sobolev norms of integer order $k \ge -1$.

Denote
\begin{align*}
\|v\|_{L^2(\Omega,g)}^2 &= \langle v, v \rangle_g = \int_\Omega v^2 \sqrt{\det g} \, dx, \\
|v|_{H^1(\Omega,g)}^2 &= \int_\Omega \nabla_g v^T g \nabla_g v \sqrt{\det g} \, dx = \int_\Omega \nabla v^T g^{-1} \nabla v \sqrt{\det g} \, dx,
\end{align*}
and $\|v\|_{H^1(\Omega,g)}^2 = \|v\|_{L^2(\Omega,g)}^2 + |v|_{H^1(\Omega,g)}^2$.  Also let
\begin{equation} \label{Hm1norm}
\|v\|_{H^{-1}(\Omega,g)} = \sup_{u \in H^1_0(\Omega)} \frac{\langle v, u \rangle_g}{\|u\|_{H^1(\Omega)}}.
\end{equation}

Note that if the eigenvalues of $g$ are bounded above and below by positive constants on $\overline{\Omega}$, then the norms $\|v\|_{L^2(\Omega,g)}$ and $\|v\|_{H^1(\Omega,g)}$ are equivalent to the norms $\|v\|_{L^2(\Omega)}$ and $\|v\|_{H^1(\Omega)}$, respectively.  These facts imply that~(\ref{Hm1norm}) is equivalent to the norm obtained by replacing $\|u\|_{H^1(\Omega)}$ with $\|u\|_{H^1(\Omega,g)}$ in the denominator of ~(\ref{Hm1norm}).

Our analysis will make use of the broken Sobolev norms
\[
\|v\|_{W^{s,p}_h(\Omega)} = \left( \sum_{K \in \mathcal{T}_h} \|v\|_{W^{s,p}(K)}^p \right)^{1/p},
\]
with the obvious modifications for $p=\infty$.  We denote $\|\cdot\|_{H^s_h(\Omega)} = \|\cdot\|_{W^{s,2}_h(\Omega)}$, and we use analogous notation $|\cdot|_{W^{s,p}_h(\Omega)}$ and $|\cdot|_{H^s_h(\Omega)}$ for the corresponding broken Sobolev semi-norms.

For $p \in [1,\infty]$ and $s > 2/p$, we denote
\[
\mathcal{M}^{s,p}(\Omega) = \{g \in W^{s,p}(\Omega) \otimes \mathbb{S} \mid g(x) \succ 0, \, \forall x \in \overline{\Omega} \}.
\]
Note that the condition $g(x) \succ 0$ is meaningful for $s>2/p$, since the Sobolev embedding theorem implies that elements of $W^{s,p}(\Omega)$ with $s > 2/p$ are continuous on $\overline{\Omega}$.   It is known that for $p \in (1,\infty)$ and $s > 2/p+1$, the curvature operator $\kappa$ maps $\mathcal{M}^{s,p}(\Omega)$ into $W^{s-2,p}(\Omega)$~\cite[Lemma 1]{fischer1975deformations}.  

\begin{theorem} \label{thm:err}
Let $g \in \mathcal{M}^{2,\infty}(\Omega)$.  Suppose that $\{g_h \in \Sigma_h\}_{h>0}$ is a sequence satisfying $\lim_{h \rightarrow 0} \|g_h-g\|_{L^\infty(\Omega)} = 0$, $\lim_{h \rightarrow 0} h^{-1} \log h^{-1} \|g_h-g\|_{L^2(\Omega)} = 0$, and $C_0 := \sup_{h > 0} \|g_h\|_{W^{1,\infty}_h(\Omega)} < \infty$.
Then there exists a constant $C$ depending on $\Omega$, $\|g\|_{W^{1,\infty}(\Omega)}$, $\|g^{-1}\|_{L^\infty(\Omega)}$, $\|\kappa(g)\|_{L^2(\Omega)}$, $r$, $q$, $C_0$, and the shape regularity and quasi-uniformity constants $C_1$ and $C_2$ such that for every $h$ sufficiently small,
\begin{equation} \label{Hm1est}
\begin{split}
h^{-1} &\|\kappa_h(g_h)-\kappa(g)\|_{H^{-1}(\Omega,g)} + \|  \kappa_h(g_h) - \kappa(g) \|_{L^2(\Omega)} \\ 
&\le C \left( h^{-2} \|g_h-g\|_{L^2(\Omega)} + h^{-1} |g_h-g|_{H^1_h(\Omega)} + \inf_{u_h \in V_h} \|\kappa(g) - u_h\|_{L^2(\Omega)} \right).
\end{split}
\end{equation}
Furthermore, if $\kappa(g) \in H^m(\Omega) \cap H^1_0(\Omega)$ with $m \in \mathbb{N}$, then for each $k=0,1,\dots,q$ and each $\ell=k,k+1,\dots,\min\{q+1,m\}$,
\begin{equation} \label{Hkest}
h^k | \kappa_h(g_h) - \kappa(g) |_{H^k_h(\Omega)} \le  C \bigg( h^{-2} \|g_h-g\|_{L^2(\Omega)} + h^{-1} |g_h-g|_{H^1_h(\Omega)} + h^\ell |\kappa(g)|_{H^\ell(\Omega)}  \bigg)
\end{equation}
for every $h$ sufficiently small.
\end{theorem}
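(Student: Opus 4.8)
The plan is to read \eqref{integralh} as defining $\kappa_h(g_h)$ through a Galerkin problem in the $g_h$-weighted $L^2$-inner product $\langle\cdot,\cdot\rangle_{g_h}$, and to prove the estimate by combining a consistency analysis of the right-hand side with the stability of this inner product. The starting point is that, because $g$ is smooth, element-wise integration by parts (the identity relating $b$ and $b_h$ in the Hellan--Herrmann--Johnson framework) gives, for every $v_h\in V_h$, the consistency relation $\langle\kappa(g),v_h\rangle_g=\tfrac12\int_0^1 b_h((1-t)\delta+tg;\,g-\delta,v_h)\,dt$, which is the exact counterpart of \eqref{integralh} with $g_h$ replaced by $g$ and the discrete inner product replaced by the continuous one. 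Subtracting this from \eqref{integralh} and inserting an arbitrary $u_h\in V_h$, I obtain for $w_h:=\kappa_h(g_h)-u_h\in V_h$ the representation $\langle w_h,v_h\rangle_{g_h}=E_1(v_h)+E_2(v_h)+\langle\kappa(g)-u_h,v_h\rangle_{g_h}$, where $E_1(v_h)=\tfrac12\int_0^1[b_h(g_t^h;g_h-\delta,v_h)-b_h(g_t;g-\delta,v_h)]\,dt$ (with $g_t^h=(1-t)\delta+tg_h$ and $g_t=(1-t)\delta+tg$) is the consistency error from the metric perturbation, and $E_2(v_h)=\langle\kappa(g),v_h\rangle_g-\langle\kappa(g),v_h\rangle_{g_h}$ accounts for the change of volume form.

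The heart of the proof is bounding $E_1$. I would telescope the integrand by first changing the tensor argument from $g-\delta$ to $g_h-\delta$ at the fixed smooth metric $g_t$, then changing the metric from $g_t$ to $g_t^h$ at the fixed argument $g_h-\delta$. For the first piece, the volume contribution $\sum_K\langle S_{g_t}(g_h-g),\Hess_{g_t}v_h\rangle_{g_t,K}$ is estimated by Cauchy--Schwarz as $\|g_h-g\|_{L^2}(|v_h|_{H^2_h}+|v_h|_{H^1})$, using that $g\in W^{1,\infty}$ keeps the Christoffel symbols of $g_t$ bounded; the edge contribution is controlled by a trace inequality after using the crucial fact that $\tau^T(g_h-g)\tau$ is \emph{continuous} across every interior edge (both $\tau^Tg_h\tau$ and $\tau^Tg\tau$ are), so that $\|\tau^T_{g_t}(g_h-g)\tau_{g_t}\|_{L^2(e)}\lesssim h^{-1/2}\|g_h-g\|_{L^2(K)}+h^{1/2}|g_h-g|_{H^1(K)}$ while $\|\llbracket\partial v_h/\partial n_{g_t}\rrbracket\|_{L^2(e)}\lesssim h^{-1/2}|v_h|_{H^1}+h^{1/2}|v_h|_{H^2_h}$ on the triangles meeting $e$. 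For the second, metric-variation piece, the difference is Lipschitz in the metric through $m\mapsto(m^{-1},\sqrt{\det m},\Gamma(m),n_m,\tau_m)$; the algebraic appearances of $g_h-g$ pair against $\Hess v_h$ and the bounded tensor $g_h-\delta$ (bounded by $C_0$) to give $\|g_h-g\|_{L^2}$-weighted terms, while the differentiated appearances (through the Christoffel symbols) produce $|g_h-g|_{H^1_h}$-weighted terms paired against $\nabla v_h$. Uniform positive definiteness of $g_t$ and $g_t^h$ on $[0,1]$, hence uniform Lipschitz constants, is guaranteed by the smallness hypotheses $\|g_h-g\|_{L^\infty}\to0$ and $h^{-1}\log h^{-1}\|g_h-g\|_{L^2}\to0$ (the latter being the technical requirement, with the logarithm characteristic of two-dimensional inverse/Sobolev estimates, that ensures we are in the regime where these bounds hold uniformly in $t$ and over the mesh). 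Collecting terms and absorbing the half-powers of $h$ via $|v_h|_{H^1}\lesssim h^{-1}\|v_h\|_{L^2}$ and $|v_h|_{H^2_h}\lesssim h^{-2}\|v_h\|_{L^2}$ yields $|E_1(v_h)|+|E_2(v_h)|\le C(h^{-2}\|g_h-g\|_{L^2}+h^{-1}|g_h-g|_{H^1_h})\|v_h\|_{L^2(\Omega,g_h)}$, where $E_2$, governed by $\sqrt{\det g}-\sqrt{\det g_h}$, is of strictly lower order and bounded the same way using $\|\kappa(g)\|_{L^2}$.

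With this consistency bound in hand, the three families of norms follow by standard arguments. Choosing $v_h=w_h$ and using that $\langle\cdot,\cdot\rangle_{g_h}$ is equivalent to the $L^2(\Omega)$-inner product (uniform spectral bounds on $g_h$, again from $L^\infty$ closeness to $g$) gives $\|w_h\|_{L^2}\lesssim h^{-2}\|g_h-g\|_{L^2}+h^{-1}|g_h-g|_{H^1_h}+\|\kappa(g)-u_h\|_{L^2}$; a triangle inequality and the infimum over $u_h\in V_h$ produce the $L^2$ part of \eqref{Hm1est}. The $H^{-1}(\Omega,g)$ bound follows by duality: for $u\in H^1_0(\Omega)$ I split $\langle e_h,u\rangle_g=\langle e_h,u-\mathcal I_h u\rangle_g+\langle e_h,\mathcal I_hu\rangle_g$ for $e_h:=\kappa_h(g_h)-\kappa(g)$ with $\mathcal I_h$ a Scott--Zhang interpolant; the first term is $\lesssim h\|e_h\|_{L^2}\|u\|_{H^1}$, and in the second the consistency bound is applied to $v_h=\mathcal I_hu$, for which $|\mathcal I_hu|_{H^2_h}\lesssim h^{-1}\|u\|_{H^1}$ costs only \emph{one} inverse power rather than two, gaining the extra factor of $h$ recorded in \eqref{Hm1est}. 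Finally, for $k\ge1$ I write $|\kappa_h(g_h)-\kappa(g)|_{H^k_h}\le|\kappa_h(g_h)-I_h\kappa(g)|_{H^k_h}+|I_h\kappa(g)-\kappa(g)|_{H^k_h}$, bound the first summand by $h^{-k}\|\kappa_h(g_h)-I_h\kappa(g)\|_{L^2}$ through an inverse inequality (its argument lies in $V_h$), and control the remaining $L^2$ and interpolation errors by the established $L^2$ estimate together with $\|\kappa(g)-I_h\kappa(g)\|_{L^2}+h^k|\kappa(g)-I_h\kappa(g)|_{H^k_h}\lesssim h^\ell|\kappa(g)|_{H^\ell}$ for $k\le\ell\le\min\{q+1,m\}$, using $\kappa(g)\in H^m\cap H^1_0$ so that $I_h\kappa(g)\in V_h$; multiplying by $h^k$ gives \eqref{Hkest}.

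The step I expect to be the main obstacle is the bound on $E_1$: tracking the nonlinear, $t$-dependent metric dependence of $b_h$ through $S_m$, $\Hess_m$, the volume form, and the unit vectors $\tau_m,n_m$ uniformly in $t$, and especially controlling the edge terms, where one must combine sharp trace and inverse inequalities with the tangential--tangential continuity of the Regge perturbation $g_h-g$ so that the half-integer powers of $h$ assemble into exactly the $h^{-2}$ and $h^{-1}$ weights appearing in \eqref{Hm1est}--\eqref{Hkest}. Everything else reduces to Cauchy--Schwarz, norm equivalence, inverse inequalities, and polynomial interpolation.
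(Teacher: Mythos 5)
Your proposal is correct, and its technical core coincides with the paper's: the same consistency identity $\langle\kappa(g),v\rangle_g=\tfrac12\int_0^1 b_h((1-t)\delta+tg;g-\delta,v)\,dt$ obtained by element-wise integration by parts (Lemma~\ref{lemma:kappaint}), the same telescoping of the integrand into a metric-perturbation piece and a tensor-argument piece (you perform the two telescoping steps in the opposite order from the paper's proof of Proposition~\ref{prop3}, which is immaterial), the same trace/inverse-inequality treatment of the edge and volume terms, and the same duality and interpolation arguments for the $H^{-1}$ and $H^k$ norms. The one genuine difference is organizational: you attach the volume-form discrepancy to the \emph{exact} curvature (your $E_2=\langle\kappa(g),v_h\rangle_g-\langle\kappa(g),v_h\rangle_{g_h}$, controlled directly by $\|\kappa(g)\|_{L^2(\Omega)}$), whereas the paper attaches it to the \emph{discrete} curvature ($\langle\kappa_h(g_h),v_h\rangle_g-\langle\kappa_h(g_h),v_h\rangle_{g_h}$, Proposition~\ref{prop2}), which forces it to bound $\|\kappa_h(g_h)\|_{L^2(\Omega)}$ a priori and then absorb a term $Ch^{-1}\log h^{-1}\|g_h-g\|_{L^2(\Omega)}\|\kappa_h(g_h)-\kappa(g)\|_{H^{-1}(\Omega,g)}$ into the left-hand side; correspondingly the paper proves the $H^{-1}$ bound first and deduces $L^2$ from it, while you prove $L^2$ first by testing with $v_h=w_h$ and then obtain $H^{-1}$ by duality. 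Your arrangement sidesteps the kick-back argument, which is a mild simplification. One detail you should make explicit: in your duality step the quantity controlled by $E_1+E_2$ is $\langle e_h,v_h\rangle_{g_h}$, not $\langle e_h,v_h\rangle_g$, so a residual term $\int_\Omega e_h\, v_h\,(\sqrt{\det g}-\sqrt{\det g_h})\,dx$ remains; it is handled by the logarithmic inverse estimate $\|v_h\|_{L^\infty(\Omega)}\le C\log h^{-1}\|v_h\|_{H^1(\Omega)}$, your already-established $L^2$ bound, and the hypothesis $h^{-1}\log h^{-1}\|g_h-g\|_{L^2(\Omega)}\to0$ --- this is precisely where that hypothesis enters your version of the argument, so it is not optional.
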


Note that the theorem is vacuous when $r=0$, since we generally cannot expect $\|g_h-g\|_{L^2(\Omega)}$ to decay faster than $O(h^{r+1})$.  It should also be noted that the assumptions in the theorem statement guarantee that $g_h \in \Sigma_{h+}$ for $h$ sufficiently small, so that $\kappa_h(g_h)$ is well-defined.  See Section~\ref{sec:basic} for more details.

Our proof will be structured as follows.  First, we verify in Lemma~\ref{lemma:kappaint} that the exact Gaussian curvature satisfies
\[
\langle \kappa(g), v \rangle_g = \frac{1}{2} \int_0^1 b_h( (1-t)\delta + t g; g - \delta, v) \, dt
\]
for every $v \in V$.  Next, we consider an arbitrary $v \in H^1_0(\Omega)$ and write
\begin{align}
\langle \kappa_h(g_h) - \kappa(g), v \rangle_g 
&= \big( \langle \kappa_h(g_h), v_h \rangle_{g_h} - \langle \kappa(g), v_h \rangle_g \big) \nonumber \\
&\;\;\; + \langle \kappa_h(g_h) - \kappa(g), v - v_h \rangle_g \label{spliterror} \\
&\;\;\; + \big( \langle \kappa_h(g_h), v_h \rangle_g - \langle \kappa_h(g_h), v_h \rangle_{g_h} \big), \nonumber
\end{align}
with $v_h \in V_h$.
The three terms above are estimated in Propositions~\ref{prop1},~\ref{prop2}, and~\ref{prop3}.  Combining them will lead to Theorem~\ref{thm:err}.

\subsection{Consistency of $b_h$}

We begin by recalling two integration-by-parts formulas.
\begin{lemma}
Let $K \in \mathcal{T}_h$.
For every $v \in H^1(K)$ and every $w \in H^1(K) \otimes \mathbb{R}^2$, we have
\begin{equation} \label{ibp1}
\int_K  w^T g \nabla_g v \sqrt{\det g} \, dx = \int_{\partial K} w^T g n_g v \sqrt{\tau^T g \tau} \, d\ell - \int_K (\dv_g w) v \sqrt{\det g} \, dx.
\end{equation}
In addition, for every $v \in H^2(K)$ and every $\sigma \in H^1(K) \otimes \mathbb{S}$, we have
\begin{equation} \label{ibp2}
\begin{split}
\int_K (\dv_g \sigma)^T g \nabla_g v \sqrt{\det g} \, dx = \int_{\partial K} n_g^T \sigma \nabla_g v \sqrt{\tau^T g \tau} \, d\ell \\ - \int_K \Tr(g^{-1} \sigma g^{-1} \Hess_g v) \sqrt{\det g} \, dx.
\end{split}
\end{equation}
\end{lemma}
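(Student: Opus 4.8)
The plan is to recognize both identities as the Euclidean divergence theorem in disguise. For \eqref{ibp1} I would first use $g\nabla_g v = \nabla v$ together with the definition $\sqrt{\det g}\,\dv_g w = \dv(\sqrt{\det g}\, w)$, so that the left-hand integrand is $(\nabla v)^T w\sqrt{\det g}$ and the last right-hand integrand is $v\,\dv(\sqrt{\det g}\, w)$. The ordinary product rule collapses the two volume integrals into $\int_K\dv(v\sqrt{\det g}\, w)\,dx$, and the Euclidean divergence theorem rewrites this as $\int_{\partial K} v\sqrt{\det g}\, w^T n\, d\ell$, with $n$ the Euclidean outward normal. What remains is the boundary algebra $g n_g\sqrt{\tau^T g\tau} = \sqrt{\det g}\, n$, which I would read off from the facts already recorded in the text: substituting the definition of $n_g$ and using $gJg = (\det g)J$ gives $g n_g\sqrt{\tau^T g\tau} = \sqrt{\det g}\, J\tau$, while $\tau = -Jn$ and $J^2 = -I$ yield $J\tau = n$.

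For \eqref{ibp2} the idea is to reduce the second-order statement to the first-order one through the auxiliary vector field $w = g^{-1}\sigma\nabla_g v$, i.e.\ $w^i = \sigma^{ij}\partial_j v$ with $\sigma^{ij}=g^{ik}\sigma_{k\ell}g^{\ell j}$. The crux is the pointwise covariant Leibniz identity
\[
\dv_g w = (\dv_g\sigma)^T g\nabla_g v + \Tr(g^{-1}\sigma g^{-1}\Hess_g v).
\]
Granting this, I would integrate against $\mu(g)$ and apply \eqref{ibp1} with the test function $v\equiv 1$, so that its interior term vanishes and $\int_K\dv_g w\,\mu(g) = \int_{\partial K} w^T g n_g\sqrt{\tau^T g\tau}\, d\ell$. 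Finally $w^T g n_g = (\nabla_g v)^T\sigma g^{-1}g n_g = n_g^T\sigma\nabla_g v$ matches the stated boundary integrand exactly, so \eqref{ibp2} follows with no further boundary computation.

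The main obstacle is the coordinate proof of the Leibniz identity. Writing $\dv_g w = \partial_i w^i + \Gamma^j_{ji}w^i$ (using $\partial_i\log\sqrt{\det g} = \Gamma^j_{ji}$) and expanding, the second-derivative term $\sigma^{ij}\partial_i\partial_j v$ and the terms $(\partial_i\sigma^{ij})\partial_j v$ and $\Gamma^m_{mi}\sigma^{ij}\partial_j v$ line up, after relabeling dummy indices, with the matching pieces of $(\dv_g\sigma)^T g\nabla_g v$ and of $\Tr(g^{-1}\sigma g^{-1}\Hess_g v)$. The delicate point is that the two leftover first-order contributions — the $\Gamma^j_{mk}\sigma^{km}\partial_j v$ coming from $\dv_g\sigma$ and the $-\sigma^{ij}\Gamma^k_{ij}\partial_k v$ coming from the Christoffel part of $\Hess_g v$ — must cancel exactly, which they do by the symmetries $\sigma^{ij}=\sigma^{ji}$ and $\Gamma^k_{ij}=\Gamma^k_{ji}$. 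Keeping this index bookkeeping honest is the only genuine work; everything else is the divergence theorem.
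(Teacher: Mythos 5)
Your proposal is correct and follows essentially the same route as the paper: reduce \eqref{ibp1} to the Euclidean divergence theorem via the boundary identity $g n_g\sqrt{\tau^T g\tau}=\sqrt{\det g}\,n$ (the paper's equation~(\ref{gng})), and reduce \eqref{ibp2} to it via the coordinate identity $(\dv_g\sigma)^T g\nabla_g v=\dv_g(g^{-1}\sigma\nabla_g v)-\Tr(g^{-1}\sigma g^{-1}\Hess_g v)$, which is exactly the paper's~(\ref{divrelation}). The only cosmetic difference is that you recycle \eqref{ibp1} with $v\equiv 1$ where the paper invokes the divergence theorem directly a second time; the index cancellation you flag as the delicate point does go through by the symmetries of $\sigma^{ij}$ and $\Gamma^k_{ij}$, as you say.
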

\begin{proof}
The (Euclidean) divergence theorem gives
\begin{align*}
\int_K w^T g \nabla_g v \sqrt{\det g} \, dx 
&= \int_K w^T \nabla v \sqrt{\det g} \, dx \\
&= \int_{\partial K} w^T n v \sqrt{\det g} \, d\ell - \int_K \dv(w \sqrt{\det g}) v \, dx \\
&= \int_{\partial K} w^T g n_g v \sqrt{\tau^T g \tau} \, d\ell - \int_K (\dv_g w) v \sqrt{\det g} \, dx,
\end{align*}
where we have used the fact that
\begin{equation} \label{gng}
g n_g = \frac{ gJg \tau}{\sqrt{\tau^T g \tau} \sqrt{\det g}} = \frac{\sqrt{\det g} \, J \tau}{\sqrt{\tau^T g \tau}} = \frac{\sqrt{\det g} \, n}{\sqrt{\tau^T g \tau}}.
\end{equation}
To prove~(\ref{ibp2}), note first that
\begin{equation} \label{divrelation}
(\dv_g \sigma)^T g \nabla_g v = \dv_g(g^{-1} \sigma \nabla_g v) - \Tr(g^{-1} \sigma g^{-1} \Hess_g v),
\end{equation}
which can be verified by expanding both sides in coordinates.  
The formula~(\ref{ibp2}) now follows from the observation that
\begin{align*}
\int_K \dv_g(g^{-1}\sigma \nabla_g v) \sqrt{\det g} \, dx
&= \int_K \dv(g^{-1} \sigma \nabla_g v \sqrt{\det g}) \, dx \\
&= \int_{\partial K} n^T g^{-1} \sigma \nabla_g v \sqrt{\det g} \, d\ell \\
&= \int_{\partial K} n_g^T \sigma \nabla_g v \sqrt{\tau^T g \tau} \, d\ell,
\end{align*}
where we have used~(\ref{gng}) again.  
\end{proof}

\begin{lemma} \label{intbyparts}
For every $\sigma \in H^2(\Omega) \otimes \mathbb{S}$ and every $v \in V$,
\[
b_h(g; \sigma, v) = \langle \dv_g \dv_g S_g \sigma, v \rangle_g.
\]
\end{lemma}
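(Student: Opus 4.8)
The plan is to obtain the identity by integrating by parts twice on each triangle and then carefully reassembling the boundary contributions, exploiting the orientation behavior of $\tau_g$ and $n_g$ together with the continuity built into $V$ and $\Sigma$. Write $\rho = S_g\sigma$ and localize the right-hand side as $\langle \dv_g \dv_g \rho, v\rangle_g = \sum_{K\in\mathcal{T}_h}\int_K (\dv_g\dv_g\rho)\,v\,\mu(g)$. On a fixed $K$ I would apply~(\ref{ibp1}) with $w=\dv_g\rho$ to move one divergence onto $v$, and then~(\ref{ibp2}) (with $\sigma$ replaced by $\rho$) to move the second divergence onto $v$. This yields
\[
\int_K (\dv_g\dv_g\rho)\,v\,\mu(g) = \langle S_g\sigma, \Hess_g v\rangle_{g,K} + \int_{\partial K}\Big[ (\dv_g\rho)^T g\, n_g\,v - n_g^T \rho\,\nabla_g v \Big]\sqrt{\tau^T g\tau}\,d\ell.
\]
Summing over $K$ immediately produces the volume term $\sum_K \langle S_g\sigma, \Hess_g v\rangle_{g,K}$ of $b_h$, so the whole matter reduces to showing that the sum of the boundary integrals equals the edge term $\sum_{e\in\mathcal{E}_h}\langle \tau_g^T\sigma\tau_g, \llbracket \partial v/\partial n_g\rrbracket\rangle_{g,e}$.

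For the boundary analysis I would first dispose of the term carrying $(\dv_g\rho)^T g\,n_g\,v$. Since $\sigma\in H^2(\Omega)$ and $g$ is smooth, $\dv_g\rho$ has single-valued traces across interior edges, $v$ is continuous there, and only $n_g$ reverses sign between the two triangles meeting at an edge; hence these contributions cancel in pairs on interior edges and vanish on $\partial\Omega$ because $v\in H^1_0(\Omega)$. For the remaining term I would resolve $\nabla_g v$ in the $g$-orthonormal frame $\{\tau_g, n_g\}$, using $g^{-1}=\tau_g\tau_g^T + n_g n_g^T$, to write $\nabla_g v = (\tau_g^T\nabla v)\,\tau_g + (\partial v/\partial n_g)\,n_g$. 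Together with~(\ref{nSn}) and the fact that $n_g^T g\tau_g = 0$ (so that $n_g^T\rho\,\tau_g = n_g^T\sigma\,\tau_g$ by the definition~(\ref{Sg}) of $S_g$), this gives $n_g^T\rho\,\nabla_g v = (\tau_g^T\nabla v)\,n_g^T\sigma\tau_g - (\partial v/\partial n_g)\,\tau_g^T\sigma\tau_g$.

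The normal-derivative piece is exactly what is needed: $\tau_g^T\sigma\tau_g$ and $\sqrt{\tau^T g\tau}$ are continuous across each interior edge, so summing $+\int_{\partial K}(\partial v/\partial n_g)\,\tau_g^T\sigma\tau_g\,\sqrt{\tau^T g\tau}\,d\ell$ over $K$ collapses the two normal derivatives into the jump $\llbracket \partial v/\partial n_g\rrbracket$ and reproduces the edge term of $b_h$ (the boundary edges being handled by the convention $\llbracket v\rrbracket = v|_{\partial\Omega}$). The tangential piece I expect to be the main obstacle, and it is where the orientation bookkeeping is delicate: I would note that $(\tau_g^T\nabla v)\,\sqrt{\tau^T g\tau} = \partial v/\partial\tau$ is the Euclidean tangential derivative along the edge, that $n_g^T\sigma\tau_g$ is invariant under the reflection $\tau\mapsto -\tau$ (since both $n_g$ and $\tau_g$ change sign), while $\partial v/\partial\tau$ is odd under it. Consequently $(n_g^T\sigma\tau_g)\,\partial v/\partial\tau$ takes opposite values on the two sides of an interior edge and cancels, while on boundary edges $\partial v/\partial\tau$ vanishes because the trace of $v$ is zero. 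In particular no integration by parts along the edges, and hence no vertex terms, is required. Assembling the surviving volume and normal-derivative contributions gives $\langle \dv_g\dv_g S_g\sigma, v\rangle_g = b_h(g;\sigma,v)$.
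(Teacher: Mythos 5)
Your proof is correct and is essentially the paper's own argument run in reverse: the paper starts from $b_h(g;\sigma,v)$ and integrates by parts twice via~(\ref{ibp2}) and~(\ref{ibp1}) to reach $\langle \dv_g\dv_g S_g\sigma,v\rangle_g$, whereas you start from the right-hand side and integrate by parts toward $b_h$, using the same ingredients — the frame identity $g^{-1}=\tau_g\tau_g^T+n_gn_g^T$, the relation~(\ref{nSn}), and the parity/continuity cancellations of the edge terms (your ``even/odd under $\tau\mapsto-\tau$'' bookkeeping is exactly the paper's statement that $\llbracket \tau_g^T g\nabla_g v\rrbracket=0$ and $\llbracket (\dv_g\widetilde{\sigma})^T g n_g\rrbracket=0$ while $n_g^T\widetilde{\sigma}\tau_g$ is single-valued). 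No gaps.
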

\begin{proof}
Let $\widetilde{\sigma} = S_g \sigma$.  Recalling that $\tau_g^T \sigma \tau_g = -n_g^T \widetilde{\sigma} n_g$, we have
\begin{align*}
b_h(g; \sigma, v) 
&= \sum_{K \in \mathcal{T}_h} \bigg( \int_K \Tr(g^{-1}\widetilde{\sigma} g^{-1} \Hess_g v) \sqrt{\det g} \, dx - \int_{\partial K} n_g^T \widetilde{\sigma} n_g n_g^T g \nabla_g v \sqrt{\tau^T g \tau} \, d\ell \bigg) \\
&= \sum_{K \in \mathcal{T}_h} \bigg( \int_{\partial K} n_g^T \widetilde{\sigma} \nabla_g v \sqrt{\tau^T g \tau} \, d\ell - \int_K (\dv_g \widetilde{\sigma})^T g \nabla_g v \sqrt{\det g} \, dx \\
&\hspace{0.5in} - \int_{\partial K} n_g^T \widetilde{\sigma} n_g n_g^T g \nabla_g v \sqrt{\tau^T g \tau} \, d\ell  \bigg).
\end{align*}
Using the fact that $\tau_g \tau_g^T + n_g n_g^T = g^{-1}$, the first and third terms can be combined to give
\begin{align*}
b_h(g; \sigma, v) &= \sum_{K \in \mathcal{T}_h} \bigg( \int_{\partial K} n_g^T \widetilde{\sigma} \tau_g \tau_g^T g \nabla_g v \sqrt{\tau^T g \tau} \, d\ell - \int_K (\dv_g \widetilde{\sigma})^T g \nabla_g v \sqrt{\det g} \, dx \bigg).
\end{align*}
Integrating the second term by parts, we obtain
\begin{align}
b_h(g; \sigma, v) 
&= \sum_{K \in \mathcal{T}_h} \bigg( \int_{\partial K} n_g^T \widetilde{\sigma} \tau_g \tau_g^T g \nabla_g v \sqrt{\tau^T g \tau} \, d\ell - \int_{\partial K} (\dv_g \widetilde{\sigma})^T g n_g v \sqrt{\tau^T g \tau} \, d\ell \nonumber \\
&\hspace{0.5in} + \int_K (\dv_g \dv_g \widetilde{\sigma}) v \sqrt{\det g} \, dx \bigg) \nonumber \\
&= \sum_{e \in \mathcal{E}_h} \int_e \llbracket n_g^T \widetilde{\sigma} \tau_g  \tau_g^T g \nabla_g v  \rrbracket \sqrt{\tau^T g \tau} \, d\ell - \sum_{e \in \mathcal{E}_h} \int_e \llbracket (\dv_g \widetilde{\sigma})^T g n_g  v \rrbracket \sqrt{\tau^T g \tau} \, d\ell \nonumber \\
&\hspace{0.5in} + \int_\Omega (\dv_g \dv_g \widetilde{\sigma}) v \sqrt{\det g} \, dx. \label{sum}
\end{align}
Since $v \in V$, both $v$ and its tangential derivative are continuous across element interfaces, and they vanish on $\partial\Omega$.  In particular, $\llbracket \tau_g^T g \nabla_g v \rrbracket = \llbracket \tau^T \nabla v / \sqrt{\tau^T g \tau} \rrbracket = 0$ for every $e \in \mathcal{E}_h$.   On the other hand, since $\widetilde{\sigma} \in H^2(\Omega) \otimes \mathbb{S}$, $n_g^T \widetilde{\sigma} \tau_g$ is continuous across element interfaces, and $\llbracket (\dv_g \widetilde{\sigma})^T g n_g \rrbracket = 0$ for every $e \in \mathring{\mathcal{E}}_h$.  It follows that the summations over $e \in \mathcal{E}_h$ in~(\ref{sum}) vanish.  Thus,
\[
b_h(g; \sigma, v) =  \int_\Omega (\dv_g \dv_g \widetilde{\sigma}) v \sqrt{\det g} \, dx.
\]
\end{proof}

\begin{lemma} \label{lemma:kappaint}
For every $v \in V$,
\[
\langle \kappa(g), v \rangle_g = \frac{1}{2} \int_0^1 b_h( (1-t)\delta + t g; g - \delta, v) \, dt.
\]
\end{lemma}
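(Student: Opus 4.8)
The plan is to deduce this from the smooth integral formula~(\ref{int}) by recognizing that, along the entire path of metrics joining $\delta$ to $g$, the finite-element bilinear form $b_h$ agrees with the smooth bilinear form $b(g;\sigma,v) = \langle \dv_g \dv_g S_g \sigma, v\rangle_g$ appearing in~(\ref{int}). The only input I need beyond~(\ref{int}) is Lemma~\ref{intbyparts}, which was proved precisely for this purpose: it asserts that $b_h(g;\sigma,v) = \langle \dv_g \dv_g S_g \sigma, v\rangle_g$ whenever $\sigma \in H^2(\Omega)\otimes\mathbb{S}$ and $v \in V$.

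First I would fix $v \in V$ and, for each $t \in [0,1]$, set $g_t := (1-t)\delta + tg$. Since $g$ is a smooth Riemannian metric and $\delta$ is constant and positive definite, $g_t$ is a smooth Riemannian metric for every $t \in [0,1]$ (a convex combination of positive definite matrices is positive definite), so all the metric-dependent operators $\dv_{g_t}$, $\Hess_{g_t}$, $S_{g_t}$ are well defined. Likewise $\sigma := g - \delta$ is smooth, hence $\sigma \in H^2(\Omega)\otimes\mathbb{S}$. Applying Lemma~\ref{intbyparts} with $g$ replaced by $g_t$ then gives, for each $t$,
\[
b_h(g_t;\, g-\delta,\, v) = \langle \dv_{g_t}\dv_{g_t} S_{g_t}(g-\delta),\, v\rangle_{g_t} = b(g_t;\, g-\delta,\, v).
\]
Substituting this identity into the right-hand side of~(\ref{int}) yields the claim.

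The one point that needs care is that the integral formula~(\ref{int}) was established in Section~\ref{sec:lin} for smooth test functions $v$, whereas here $v$ ranges over $V \subset H^1_0(\Omega)$. I would justify the extension as follows. For each $t$, both $\kappa(g)$ and $\dv_{g_t}\dv_{g_t}S_{g_t}(g-\delta)$ are smooth, hence bounded, functions on $\overline{\Omega}$; therefore $v \mapsto \langle \kappa(g), v\rangle_g$ and $v \mapsto b(g_t; g-\delta, v)$ are continuous linear functionals on $L^2(\Omega)$, with the latter bounded uniformly in $t \in [0,1]$ because $g_t$ is smooth and uniformly positive definite on the compact interval $[0,1]$. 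Since smooth functions are dense in $L^2(\Omega)$ and both sides of~(\ref{int}) depend continuously on $v$ in the $L^2$-topology, the identity~(\ref{int}) extends from smooth $v$ to every $v \in L^2(\Omega)$, and in particular to $v \in V$. Equivalently, one may simply re-derive~(\ref{int}) by integrating the pointwise density identity~(\ref{lin}) tested against $v$ over $t \in [0,1]$, using that $\delta$ has zero curvature; this derivation only requires $v \in L^2(\Omega)$.

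I expect this density/regularity bookkeeping to be the only real obstacle. The uniform positive-definiteness of $g_t$ on $[0,1]$ guarantees that $t \mapsto b(g_t; g-\delta, v)$ is continuous (indeed smooth), so the $t$-integral is well defined, and the matching of $b_h$ with $b$ via Lemma~\ref{intbyparts} is immediate once the regularity hypothesis $\sigma = g-\delta \in H^2(\Omega)\otimes\mathbb{S}$ has been noted. No further computation is needed.
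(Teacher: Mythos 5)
Your proposal is correct and follows essentially the same route as the paper: both proofs combine the smooth linearization/integral formula along the path $(1-t)\delta+tg$ with Lemma~\ref{intbyparts} to identify $\langle \dv_{g_t}\dv_{g_t}S_{g_t}(g-\delta),v\rangle_{g_t}$ with $b_h(g_t;g-\delta,v)$, then integrate in $t$ using $\kappa(\delta)=0$. Your extra care about extending~(\ref{int}) from smooth test functions to $v\in V$ is a reasonable addition the paper leaves implicit, but it does not change the argument.
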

\begin{proof}
Let $G(t) = (1-t)\delta + tg$ and $\sigma = g-\delta$.  From~(\ref{linint}), we obtain
\begin{align*}
\frac{d}{dt} \langle \kappa(G(t)), v \rangle_{G(t)} 
&= \frac{1}{2} \langle \dv_{G(t)} \dv_{G(t)} S_{G(t)} \sigma, v \rangle_{G(t)} \\
&= \frac{1}{2} b_h(G(t); \sigma, v),
\end{align*}
so 
\begin{align*}
\langle \kappa(g), v \rangle_{g}  
&= \langle \kappa(\delta), v \rangle_{\delta} + \frac{1}{2} \int_0^1 b_h(G(t); \sigma, v) \, dt \\
&= \frac{1}{2} \int_0^1 b_h(G(t); \sigma, v) \, dt.
\end{align*}
\end{proof}

\subsection{Basic Estimates} \label{sec:basic}

We now collect a few basic estimates in preparation for our estimation of $\langle \kappa_h(g_h) - \kappa(g), v \rangle_g$.

Throughout what follows, we make use of the fact that the eigenvalues of $g$, being positive and continuous on $\overline{\Omega}$, are bounded above and below by positive constants $C_3$ and $C_4$ that depend on $\|g\|_{L^\infty(\Omega)}$ and $\|g^{-1}\|_{L^\infty(\Omega)}$.  Hence, for any $w \in \mathbb{R}^2$, we have
\begin{equation} \label{rayleigh}
C_3 w^T w \le w^T g(x) w \le C_4 w^T w, \quad \forall x \in \overline{\Omega}.
\end{equation}
It follows also that $\|\sqrt{\det g}\|_{L^\infty(\Omega)}$ and $\|1/\sqrt{\det g}\|_{L^\infty(\Omega)}$ are each bounded above by constants depending on $\|g\|_{L^\infty(\Omega)}$ and $\|g^{-1}\|_{L^\infty(\Omega)}$.  Differentiation then reveals that $\|\sqrt{\det g}\|_{W^{1,\infty}(\Omega)}$ and $\|1/\sqrt{\det g}\|_{W^{1,\infty}(\Omega)}$ are each bounded above by constants depending on $\|g\|_{W^{1,\infty}(\Omega)}$ and $\|g^{-1}\|_{L^\infty(\Omega)}$.

We will use this information to establish analogous estimates for $g_h$, under the assumption that $C_0 := \sup_{h>0} \|g_h\|_{W^{1,\infty}_h} < \infty$ and $\lim_{h \rightarrow 0} \|g_h-g\|_{L^\infty(\Omega)} = 0$.  From this point forward, we use the letter $C$ to denote a constant which is not necessarily the same at each occurrence and may depend on $\Omega$, $\|g\|_{W^{1,\infty}(\Omega)}$, $\|g^{-1}\|_{L^\infty(\Omega)}$, $\|\kappa(g)\|_{L^2(\Omega)}$, $r$, $q$, $C_0$, and the shape regularity and quasi-uniformity constants $C_1$ and $C_2$, but not on $h$.

\begin{lemma} \label{lemma:ghinv}
Let $p \in [1,\infty]$.  For every $h$ sufficiently small,
\[
\|g_h^{-1} - g^{-1}\|_{L^p(\Omega)} \le C \|g_h - g\|_{L^p(\Omega)} 
\]
\end{lemma}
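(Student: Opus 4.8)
The plan is to reduce the estimate to the pointwise matrix identity $g_h^{-1}-g^{-1}=g_h^{-1}(g-g_h)g^{-1}$, valid wherever both matrices are invertible, and then take $L^p$ norms. The only quantity requiring care is a uniform (in $h$) bound on $\|g_h^{-1}\|_{L^\infty(\Omega)}$, which in turn rests on knowing that $g_h$ is uniformly positive definite once $h$ is small. Throughout I would work with the spectral norm, noting that on $2\times 2$ matrices it is equivalent to any other matrix norm with dimension-independent constants.

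First I would establish the lower eigenvalue bound. By~(\ref{rayleigh}) the smallest eigenvalue of $g(x)$ is at least $C_3>0$ for every $x\in\overline\Omega$. Since $g$ and $g_h$ are symmetric, Weyl's perturbation inequality gives, for a.e.\ $x$, $\lambda_{\min}(g_h(x)) \ge \lambda_{\min}(g(x)) - \|g_h(x)-g(x)\| \ge C_3 - \|g_h-g\|_{L^\infty(\Omega)}$. By hypothesis $\|g_h-g\|_{L^\infty(\Omega)}\to 0$, so there is $h_0>0$ with $\|g_h-g\|_{L^\infty(\Omega)}\le C_3/2$ for all $h\le h_0$. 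For such $h$ one obtains $\lambda_{\min}(g_h(x))\ge C_3/2$ on each element interior, hence $g_h\in\Sigma_{h+}$ and $\|g_h^{-1}\|_{L^\infty(\Omega)}\le 2/C_3$. This is exactly the remark following Theorem~\ref{thm:err} that the hypotheses force $g_h\in\Sigma_{h+}$ for small $h$.

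Next, from~(\ref{rayleigh}) again, $\|g^{-1}\|_{L^\infty(\Omega)}\le 1/C_3$. Using the identity above pointwise and submultiplicativity of the spectral norm, $\|g_h^{-1}(x)-g^{-1}(x)\| \le \|g_h^{-1}(x)\|\,\|g(x)-g_h(x)\|\,\|g^{-1}(x)\| \le (2/C_3^2)\,\|g_h(x)-g(x)\|$ for a.e.\ $x$ and all $h\le h_0$. Raising to the $p$-th power and integrating over $\Omega$ (or taking the essential supremum when $p=\infty$) yields $\|g_h^{-1}-g^{-1}\|_{L^p(\Omega)}\le (2/C_3^2)\|g_h-g\|_{L^p(\Omega)}$, which is the claim with $C=2/C_3^2$; since $C_3$ depends only on $\|g\|_{L^\infty(\Omega)}$ and $\|g^{-1}\|_{L^\infty(\Omega)}$, this $C$ is admissible.

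The only genuine obstacle is the uniform invertibility step, and even this is mild: it relies solely on the uniform convergence hypothesis $\|g_h-g\|_{L^\infty(\Omega)}\to 0$ together with strict positivity of $g$ on the compact set $\overline\Omega$. Shape-regularity, quasi-uniformity, and the $W^{1,\infty}_h$ bound $C_0$ play no role here; they enter only in the finer estimates that follow. I would also record that although $g_h$ is only piecewise smooth, the pointwise identity and eigenvalue bound hold almost everywhere on the union of open elements (whose complement has measure zero), which is all that the $L^p$ estimate requires.
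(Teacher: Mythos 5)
Your proof is correct, and it reaches the same estimate by a slightly different mechanism. Both arguments come down to a resolvent-type identity plus a uniform-in-$h$ control of $g_h^{-1}$, but you obtain that control differently: you first prove $\lambda_{\min}(g_h)\ge C_3/2$ via Weyl's perturbation inequality applied to the symmetric matrices $g(x)$ and $g_h(x)$, and then use the symmetric identity $g_h^{-1}-g^{-1}=g_h^{-1}(g-g_h)g^{-1}$ with submultiplicativity. The paper instead writes $(g_h^{-1}-g^{-1})(\delta-(g-g_h)g^{-1})=g^{-1}(g-g_h)g^{-1}$, inverts the factor $\delta-(g-g_h)g^{-1}$ by a Neumann-series bound once $\|(g-g_h)g^{-1}\|<1$, and arrives at $\|g_h^{-1}-g^{-1}\|\le \|g^{-1}(g-g_h)g^{-1}\|/(1-\|(g-g_h)g^{-1}\|)$; this is literally your identity with $g_h^{-1}=g^{-1}(\delta-(g-g_h)g^{-1})^{-1}$ expanded, so invertibility of $g_h$ is obtained as a byproduct rather than as a preliminary step. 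Your route has the small advantage of isolating the eigenvalue bound $\lambda_{\min}(g_h)\ge C_3/2$ explicitly, which is exactly the fact the paper alludes to (without proof in the lemma itself) when remarking after Theorem~\ref{thm:err} that the hypotheses force $g_h\in\Sigma_{h+}$ for small $h$; the paper's route is marginally more self-contained in that it needs only $\|g^{-1}\|_{L^\infty(\Omega)}$ and the smallness of $\|g-g_h\|_{L^\infty(\Omega)}$ without invoking a spectral perturbation result. Your attention to the a.e.\ validity of the pointwise identity on element interiors, and to the admissibility of the constant $C=2/C_3^2$ given the dependencies allowed for $C$, is appropriate and consistent with the paper's conventions.
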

\begin{proof}
The identity
\[
(g_h^{-1} - g^{-1}) (\delta - (g-g_h)g^{-1}) = g^{-1} (g-g_h) g^{-1} 
\]
shows that for $h$ sufficiently small,
\[
g_h^{-1} - g^{-1} = g^{-1} (g-g_h) g^{-1} (\delta - (g-g_h)g^{-1})^{-1}.
\]
Thus, for every $K \in \mathcal{T}_h$ and every $x \in \mathring{K}$, the operator norm of $g_h(x)^{-1} - g(x)^{-1}$ satisfies
\[
\|g_h(x)^{-1} - g(x)^{-1}\| \le \frac{ \|g^{-1}(x) (g(x)-g_h(x)) g(x)^{-1}\| }{ 1 - \|(g(x)-g_h(x))g(x)^{-1}\| }.
\]
It follows that
\[
\|g_h^{-1} - g^{-1}\|_{L^p(\Omega)} \le \frac{ \|g^{-1}\|_{L^\infty(\Omega)}^2 \|g-g_h\|_{L^p(\Omega)} }{ 1- \|g-g_h\|_{L^\infty(\Omega)} \|g^{-1}\|_{L^\infty(\Omega)} }
\]
for $h$ sufficiently small.  Taking $h$ small enough so that (say) $\|g-g_h\|_{L^\infty(\Omega)} \le \frac{1}{2\|g^{-1}\|_{L^\infty(\Omega)}}$ completes the proof.
\end{proof}

From the above lemma and the assumption that $\sup_{h>0} \|g_h\|_{W^{1,\infty}_h} < \infty$, we conclude that for $h$ sufficiently small,
\[
\|g_h\|_{W^{1,\infty}_h(\Omega)} + \|g_h^{-1}\|_{L^\infty(\Omega)} \le C.
 \]
In particular, the eigenvalues of $g_h$ are bounded above and below by positive constants independent of $h$, and estimates of the form
\[
C^{-1} w^T w \le w^T g_h(x) w \le C w^T w, \quad \forall x \in \mathring{K}, \, \forall K \in \mathcal{T}_h, \, \forall w \in \mathbb{R}^2,
\]
and
\[
\|\sqrt{\det g_h}\|_{L^\infty(\Omega)} + \|1/\sqrt{\det g_h}\|_{L^{\infty}(\Omega)} \le C
\]
hold for $h$ sufficiently small.  In the remainder of this section, we will tacitly assume that $h$ is small enough that the preceding estimates hold.

\begin{lemma} \label{lemma:sqrtdetgWkp}
For every $k \in \{0,1\}$ and $p \in [1,\infty]$,
\[
\| \sqrt{\det g} - \sqrt{\det g_h} \|_{W^{k,p}_h(\Omega)} \le C \|g_h-g\|_{W^{k,p}_h(\Omega)}.
\]
\end{lemma}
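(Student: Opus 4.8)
The plan is to exploit the smoothness of the map $\sigma \mapsto \sqrt{\det\sigma}$ on the set of symmetric positive definite matrices whose eigenvalues are confined to a fixed compact subinterval of $(0,\infty)$. By the uniform bounds on the eigenvalues of $g$ and $g_h$ established following Lemma~\ref{lemma:ghinv}, both $g(x)$ and $g_h(x)$ lie in such a set for $h$ sufficiently small, and by convexity the segment $G(s) = (1-s)g_h + sg$ does as well, since the Rayleigh quotient $w^T G(s) w = (1-s)w^T g_h w + sw^T g w$ lies between those of $g_h$ and $g$. Throughout I will use the consequences $\|\sqrt{\det g}\|_{W^{1,\infty}(\Omega)}$, $\|\sqrt{\det g_h}\|_{W^{1,\infty}_h(\Omega)}$, $\|g^{-1}\|_{L^\infty(\Omega)}$, and $\|g_h^{-1}\|_{L^\infty(\Omega)}$ are all bounded by $C$ uniformly in $h$, together with the corresponding bounds $\|g\|_{W^{1,\infty}(\Omega)}, \|g_h\|_{W^{1,\infty}_h(\Omega)} \le C$.

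For the case $k=0$, I would integrate Jacobi's formula along the segment $G(s)$. Writing
\[
\sqrt{\det g} - \sqrt{\det g_h} = \frac{1}{2}\int_0^1 \sqrt{\det G(s)}\,\Tr\!\big(G(s)^{-1}(g-g_h)\big)\,ds,
\]
the integrand is bounded pointwise by $C\,|g-g_h|$ using the uniform control of $\sqrt{\det G(s)}$ and $G(s)^{-1}$ on the segment. Taking $L^p$ norms (and noting $W^{0,p}_h(\Omega)=L^p(\Omega)$) yields $\|\sqrt{\det g}-\sqrt{\det g_h}\|_{L^p(\Omega)} \le C\|g-g_h\|_{L^p(\Omega)}$, which is the claim for $k=0$.

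For $k=1$, I would differentiate on each triangle $K$, where $g_h$ is polynomial and $g \in W^{1,\infty}$. Jacobi's formula gives $\partial_\ell \sqrt{\det g} = \tfrac{1}{2}\sqrt{\det g}\,\Tr(g^{-1}\partial_\ell g)$, and the analogous identity holds for $g_h$. Subtracting and inserting intermediate terms produces the telescoping decomposition
\[
\partial_\ell\big(\sqrt{\det g}-\sqrt{\det g_h}\big) = \tfrac{1}{2}\Big[(\sqrt{\det g}-\sqrt{\det g_h})\Tr(g^{-1}\partial_\ell g) + \sqrt{\det g_h}\,\Tr\!\big((g^{-1}-g_h^{-1})\partial_\ell g\big) + \sqrt{\det g_h}\,\Tr\!\big(g_h^{-1}(\partial_\ell g-\partial_\ell g_h)\big)\Big].
\]
The first term is bounded pointwise by $C|g-g_h|$ using the $k=0$ estimate and $|\Tr(g^{-1}\partial_\ell g)|\le C$; the second by $C|g-g_h|$ using the pointwise version of Lemma~\ref{lemma:ghinv} for $|g^{-1}-g_h^{-1}|$; and the third by $C|\partial_\ell(g-g_h)|$. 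Hence $|\nabla(\sqrt{\det g}-\sqrt{\det g_h})| \le C(|g-g_h| + |\nabla(g-g_h)|)$ pointwise on each $K$. Taking $L^p(K)$ norms, combining with the $k=0$ bound, and summing the $p$-th powers over $K \in \mathcal{T}_h$ gives $\|\sqrt{\det g}-\sqrt{\det g_h}\|_{W^{1,p}_h(\Omega)} \le C\|g-g_h\|_{W^{1,p}_h(\Omega)}$.

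I expect the main obstacle to be the $k=1$ case: the whole estimate hinges on arranging the telescoping decomposition of the derivative difference so that each summand isolates a single factor that is either uniformly bounded (the metric derivatives and determinants) or directly controlled by $\|g-g_h\|_{W^{1,p}_h(\Omega)}$ (the determinant difference, the inverse difference, and the derivative difference). Once the uniform bounds of the preliminary discussion are in hand, the remaining steps are routine applications of Hölder's inequality and summation over the triangulation.
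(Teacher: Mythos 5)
Your proof is correct and follows essentially the same route as the paper: the $k=1$ case rests on exactly the same three-term telescoping of $\partial_\ell(\sqrt{\det g}-\sqrt{\det g_h})$ via Jacobi's formula, with the terms controlled by the $k=0$ bound, (the pointwise form of) Lemma~\ref{lemma:ghinv}, and the uniform $W^{1,\infty}$ bounds on $g$, $g_h$, and their inverses. The only immaterial difference is in the $k=0$ step, where the paper bounds $\det g - \det g_h$ by an explicit entrywise expansion and then uses $\sqrt{a}-\sqrt{b}=(a-b)/(\sqrt{a}+\sqrt{b})$, whereas you integrate Jacobi's formula along the segment $(1-s)g_h+sg$ (correctly noting its uniform positive definiteness); both yield the same pointwise Lipschitz estimate.
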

\begin{proof}
Let $g = \left( \begin{smallmatrix} a & b \\ c & d \end{smallmatrix} \right)$ and $g_h = \left( \begin{smallmatrix} a_h & b_h \\ c_h & d_h \end{smallmatrix} \right)$.  The identity
\[
\det g_h - \det g = (a_h - a)d_h + a(d_h-d) - (b_h-b)c_h - b(c_h-c)
\]
shows that
\[
\|\det g - \det g_h\|_{L^p(\Omega)} \le C\|g_h-g\|_{L^p(\Omega)},
\]
and the identity
\[
\sqrt{\det g} - \sqrt{\det g_h} = \frac{\det g - \det g_h}{\sqrt{\det g} + \sqrt{\det g_h}}
\]
shows that
\begin{equation} \label{sqrtdetgLp}
\|\sqrt{\det g} - \sqrt{\det g_h}\|_{L^p(\Omega)} \le C\|\det g - \det g_h\|_{L^p(\Omega)} \le C\|g_h-g\|_{L^p(\Omega)}.
\end{equation}
To obtain the $W^{1,p}_h(\Omega)$ estimate, we compute
\[
\frac{\partial}{\partial x^i} \sqrt{\det g} = \frac{1}{2} \Tr\left( g^{-1} \frac{\partial g}{\partial x^i} \right) \sqrt{\det g},
\]
which gives
\begin{dmath*}
\frac{\partial}{\partial x^i} (\sqrt{\det g} - \sqrt{\det g_h}) = \frac{1}{2} \Tr\left( \left(g^{-1}-g_h^{-1}\right) \frac{\partial g}{\partial x^i} \right) \sqrt{\det g} + \frac{1}{2} \Tr\left( g_h^{-1} \left(  \frac{\partial g}{\partial x^i} -  \frac{\partial g_h}{\partial x^i} \right) \right) \sqrt{\det g} +  \frac{1}{2} \Tr\left( g_h^{-1} \frac{\partial g_h}{\partial x^i} \right) (\sqrt{\det g} - \sqrt{\det g_h}).
\end{dmath*}
From this,~(\ref{sqrtdetgLp}), and Lemma~\ref{lemma:ghinv}, we conclude that
\begin{align*}
| \sqrt{\det g} - \sqrt{\det g_h} |_{W^{1,p}_h(\Omega)} 
&\le C \bigg( \|g^{-1}-g_h^{-1}\|_{L^p(\Omega)} + |g-g_h|_{W^{1,p}_h(\Omega)} \\&\hspace{0.5in} + \|\sqrt{\det g} - \sqrt{\det g_h}\|_{L^p(\Omega)} \bigg) \\
&\le C\|g-g_h\|_{W^{1,p}_h(\Omega)}.
\end{align*}
\end{proof}

\subsubsection{The $L^2(\Omega,g)$-Orthogonal Projector}

Let $P_h : L^2(\Omega) \rightarrow V_h$ denote the $L^2(\Omega,g)$-orthogonal projector onto $V_h$, so that
\[
\langle P_h u - u, w_h \rangle_g = 0, \quad \forall u \in L^2(\Omega), \, \forall w_h \in V_h.
\]
\begin{lemma} \label{lemma:Pherr}
We have
\begin{align}
\|P_h u - u\|_{H^{-1}(\Omega,g)} &\le C h \inf_{u_h \in V_h} \|u_h - u\|_{L^2(\Omega)}, &\quad &\forall u \in L^2(\Omega), \label{PherrHm1} \\
\|P_h u - u\|_{L^2(\Omega)} &\le C \inf_{u_h \in V_h} \|u_h - u\|_{L^2(\Omega)}, &\quad &\forall u \in L^2(\Omega), \label{PherrL2} \\
\|P_h u\|_{H^1(\Omega)} &\le C\|u\|_{H^1(\Omega)}, &\quad &\forall u \in H^1_0(\Omega). \label{PhboundedH1}
\end{align}
\end{lemma}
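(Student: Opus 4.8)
The plan is to establish the three bounds in sequence, since the $L^2$ estimate~(\ref{PherrL2}) underpins the other two. Because $P_h$ is the $L^2(\Omega,g)$-orthogonal projector, $P_h u$ is the $\|\cdot\|_{L^2(\Omega,g)}$-best approximation of $u$ in $V_h$. Combining this with the equivalence of $\|\cdot\|_{L^2(\Omega,g)}$ and $\|\cdot\|_{L^2(\Omega)}$ (valid by~(\ref{rayleigh}) and the bounds on $\sqrt{\det g}$ recorded above), I would write, for any $u_h \in V_h$,
\[
\|P_h u - u\|_{L^2(\Omega)} \le C\|P_h u - u\|_{L^2(\Omega,g)} \le C\|u_h - u\|_{L^2(\Omega,g)} \le C\|u_h - u\|_{L^2(\Omega)},
\]
and then take the infimum over $u_h \in V_h$ to obtain~(\ref{PherrL2}).

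For~(\ref{PherrHm1}) I would run a duality (Aubin--Nitsche) argument. Fix $w \in H^1_0(\Omega)$. The defining orthogonality $\langle P_h u - u, w_h\rangle_g = 0$ for all $w_h \in V_h$ lets me subtract an arbitrary $w_h$ before applying Cauchy--Schwarz and the norm equivalence:
\[
\langle P_h u - u, w\rangle_g = \langle P_h u - u, w - w_h\rangle_g \le C\|P_h u - u\|_{L^2(\Omega)}\,\|w - w_h\|_{L^2(\Omega)}.
\]
Choosing $w_h$ to be a quasi-interpolant of $w$ (for instance the Scott--Zhang interpolant, which preserves the homogeneous Dirichlet data) gives $\|w - w_h\|_{L^2(\Omega)} \le Ch\,|w|_{H^1(\Omega)} \le Ch\,\|w\|_{H^1(\Omega)}$. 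Dividing by $\|w\|_{H^1(\Omega)}$, taking the supremum over $w$, and invoking~(\ref{PherrL2}) then yields~(\ref{PherrHm1}).

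The $H^1$-stability~(\ref{PhboundedH1}) is the crux, and it is the step that genuinely exploits quasi-uniformity. I would fix a Scott--Zhang interpolant $I_h : H^1_0(\Omega) \to V_h$ enjoying both the approximation bound $\|u - I_h u\|_{L^2(\Omega)} \le Ch\,|u|_{H^1(\Omega)}$ and the stability bound $\|I_h u\|_{H^1(\Omega)} \le C\|u\|_{H^1(\Omega)}$. Splitting $P_h u = (P_h u - I_h u) + I_h u$, the term $I_h u$ is controlled directly by the stability of $I_h$. For the remainder, $P_h u - I_h u \in V_h$, so the global inverse inequality (valid because $\{\mathcal{T}_h\}$ is quasi-uniform, whence every $h_K$ is comparable to $h$) gives $\|P_h u - I_h u\|_{H^1(\Omega)} \le Ch^{-1}\|P_h u - I_h u\|_{L^2(\Omega)}$; a triangle inequality combined with~(\ref{PherrL2}) applied with $u_h = I_h u$ and the approximation bound for $I_h$ shows $\|P_h u - I_h u\|_{L^2(\Omega)} \le Ch\,|u|_{H^1(\Omega)}$, so the factors $h^{-1}$ and $h$ cancel and~(\ref{PhboundedH1}) follows. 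This last estimate is the main obstacle: $H^1$-stability of the $L^2$-projection is classical but nonelementary, resting on the simultaneous availability of a quasi-interpolant that is $L^2$-approximating, $H^1$-stable, and boundary-condition-preserving, together with the global inverse inequality --- which is exactly where quasi-uniformity is indispensable. Once~(\ref{PherrL2}) is in hand, the remaining two bounds are comparatively routine.
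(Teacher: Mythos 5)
Your proposal is correct and follows essentially the same route as the paper: norm equivalence between $\|\cdot\|_{L^2(\Omega,g)}$ and $\|\cdot\|_{L^2(\Omega)}$ for~(\ref{PherrL2}), a duality argument testing against $w - w_h$ for~(\ref{PherrHm1}) (the paper takes $w_h = P_h w$ where you take a Scott--Zhang interpolant, an immaterial difference), and the standard Scott--Zhang-plus-inverse-inequality splitting for~(\ref{PhboundedH1}). No gaps.
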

\begin{proof}
For any $u \in L^2(\Omega)$ and any $u_h \in V_h$, we have
\[
\|P_h u - u \|_{L^2(\Omega)} \le \|P_h(u-u_h)\|_{L^2(\Omega)} + \|u_h-u\|_{L^2(\Omega)}.
\]
The equivalence of the norms $\|\cdot\|_{L^2(\Omega,g)}$ and $\|\cdot\|_{L^2(\Omega)}$ implies
\[
\|P_h(u-u_h)\|_{L^2(\Omega)} \le C \|P_h(u-u_h)\|_{L^2(\Omega,g)} \le C \|u-u_h\|_{L^2(\Omega,g)} \le C \|u-u_h\|_{L^2(\Omega)},
\]
so $\|P_h u - u \|_{L^2(\Omega)} \le C \|u_h-u\|_{L^2(\Omega)}$.  Since $u_h$ was arbitrary,~(\ref{PherrL2}) follows.  It follows also that $\|P_h u - u\|_{L^2(\Omega)} \le C h |u|_{H^1(\Omega)}$ if $u \in H^1_0(\Omega)$.  Now let $Q_h : H^1(\Omega) \rightarrow V_h$ denote the Scott-Zhang interpolation operator~\cite{scott1990finite}.  
Using an inverse estimate, interpolation estimates, and the stability of $Q_h$ in $H^1(\Omega)$, we obtain
\begin{align*}
\|P_h u\|_{H^1(\Omega)} 
&\le \|P_h u - Q_h u\|_{H^1(\Omega)} + \|Q_h u\|_{H^1(\Omega)} \\
&\le C\left( h^{-1} \|P_h u - Q_h u\|_{L^2(\Omega)} + \|u\|_{H^1(\Omega)} \right) \\
&\le C\left( h^{-1} \|P_h u - u\|_{L^2(\Omega)} + h^{-1} \|u - Q_h u\|_{L^2(\Omega)} + \|u\|_{H^1(\Omega)}  \right) \\
&\le C\|u\|_{H^1(\Omega)}, \quad \forall u \in H^1_0(\Omega).
\end{align*}
Finally, for any $v \in H^1_0(\Omega)$,
\begin{align*}
|\langle P_h u - u, v \rangle_g| 
&= |\langle P_h u - u, v-P_h v \rangle_g| \\
&\le \|P_h u - u\|_{L^2(\Omega,g)} \| v-P_h v \|_{L^2(\Omega,g)} \\
&\le C \|P_h u - u\|_{L^2(\Omega)} \| v-P_h v \|_{L^2(\Omega)} \\
&\le C \|P_h u - u\|_{L^2(\Omega)} h | v |_{H^1(\Omega)},
\end{align*}
so
\[
\|P_h u - u\|_{H^{-1}(\Omega,g)} \le C h \|P_h u - u\|_{L^2(\Omega)} \le C h \inf_{u_h \in V_h} \|u_h - u\|_{L^2(\Omega)}.
\]
\end{proof}

\subsection{Proof of Theorem~\ref{thm:err}}

We are now ready to estimate the three terms in~(\ref{spliterror}).
\begin{proposition} \label{prop1}
If $v_h = P_h v$, then
\[
|\langle \kappa_h(g_h) - \kappa(g), v - v_h \rangle_g| \le C h \|v\|_{H^1(\Omega)} \inf_{u_h \in V_h} \| \kappa(g) - u_h \|_{L^2(\Omega)}.
\]
\end{proposition}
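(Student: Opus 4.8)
The goal is to bound $\langle \kappa_h(g_h) - \kappa(g), v - v_h \rangle_g$ when $v_h = P_h v$, and the term $v - v_h = v - P_h v$ is precisely the projection error of the $L^2(\Omega,g)$-orthogonal projector, which we have already analyzed in Lemma~\ref{lemma:Pherr}. My plan is to move the projection error onto a factor that we control in $H^{-1}$, so that the $H^1$-regularity of $v$ and the approximability of $\kappa(g)$ can be separated. The natural idea is to use that $P_h$ is self-adjoint with respect to $\langle \cdot, \cdot \rangle_g$ and that its range is $V_h$.

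First I would write $\langle \kappa_h(g_h) - \kappa(g), v - v_h \rangle_g = \langle \kappa_h(g_h), v - P_h v \rangle_g - \langle \kappa(g), v - P_h v \rangle_g$. Since $\kappa_h(g_h) \in V_h$, the self-adjointness of $P_h$ gives $\langle \kappa_h(g_h), v - P_h v \rangle_g = \langle P_h \kappa_h(g_h), v \rangle_g - \langle \kappa_h(g_h), P_h v \rangle_g = 0$, because $P_h \kappa_h(g_h) = \kappa_h(g_h)$ and $\langle \kappa_h(g_h), P_h v\rangle_g = \langle \kappa_h(g_h), v\rangle_g$ by the defining orthogonality relation for $P_h$. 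Hence the first piece drops out entirely, and we are left with $\langle \kappa_h(g_h) - \kappa(g), v - v_h \rangle_g = -\langle \kappa(g), v - P_h v \rangle_g$. The remaining task is to bound $|\langle \kappa(g), v - P_h v \rangle_g|$.

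For this bound I would again use the orthogonality of $P_h$ to subtract any $u_h \in V_h$ for free: since $\langle u_h, v - P_h v \rangle_g = 0$ for all $u_h \in V_h$, we have $\langle \kappa(g), v - P_h v \rangle_g = \langle \kappa(g) - u_h, v - P_h v \rangle_g$. Applying Cauchy--Schwarz in $L^2(\Omega,g)$ and then the norm equivalence $\|\cdot\|_{L^2(\Omega,g)} \sim \|\cdot\|_{L^2(\Omega)}$ from~(\ref{rayleigh}), together with the projection-error estimate $\|v - P_h v\|_{L^2(\Omega)} \le C h |v|_{H^1(\Omega)}$ (shown inside the proof of Lemma~\ref{lemma:Pherr}), yields
\[
|\langle \kappa(g) - u_h, v - P_h v \rangle_g| \le C \|\kappa(g) - u_h\|_{L^2(\Omega)} \, h \, |v|_{H^1(\Omega)}.
\]
Taking the infimum over $u_h \in V_h$ and using $|v|_{H^1(\Omega)} \le \|v\|_{H^1(\Omega)}$ gives exactly the claimed estimate.

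The calculation is essentially routine once the orthogonality structure is exploited, so there is no serious obstacle; the only point requiring care is the algebraic verification that the $\kappa_h(g_h)$ contribution vanishes. This is the step that makes the proposition clean: it relies crucially on the fact that $v_h$ is chosen to be the specific projection $P_h v$ (rather than an arbitrary interpolant), so that both $\kappa_h(g_h) \in V_h$ and the orthogonality $\langle V_h, v - P_h v\rangle_g = 0$ can be invoked simultaneously. I would therefore state the self-adjointness/orthogonality cancellation explicitly before proceeding to the Cauchy--Schwarz estimate.
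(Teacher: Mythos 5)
Your proof is correct, and its central move is the same as the paper's: the orthogonality of $P_h$ lets you replace $\kappa_h(g_h)$ by an arbitrary $u_h \in V_h$ (equivalently, drop the $\kappa_h(g_h)$ term and insert $u_h$ against $\kappa(g)$), so the whole quantity reduces to $\langle u_h - \kappa(g), v - P_h v\rangle_g$ with $u_h$ free to be chosen. The only difference is where you harvest the factor of $h$. You apply Cauchy--Schwarz in $L^2(\Omega,g)\times L^2(\Omega,g)$ and use $\|v - P_h v\|_{L^2(\Omega)} \le C h\,|v|_{H^1(\Omega)}$, which needs only the elementary projection-error bound (established inside the proof of Lemma~\ref{lemma:Pherr}, and valid here since the ambient argument takes $v \in H^1_0(\Omega)$). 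The paper instead pairs $H^{-1}(\Omega,g)$ against $H^1(\Omega)$, takes $u_h = P_h\kappa(g)$, and extracts the $h$ from the estimate $\|P_h\kappa(g)-\kappa(g)\|_{H^{-1}(\Omega,g)} \le Ch\inf_{u_h}\|\kappa(g)-u_h\|_{L^2(\Omega)}$ of~(\ref{PherrHm1}), at the cost of also invoking the $H^1$-stability~(\ref{PhboundedH1}) of $P_h$ to control $\|v - P_h v\|_{H^1(\Omega)}$. The two are dual bookkeeping choices yielding the identical bound; yours is marginally more economical in the lemmas it uses, while the paper's keeps the $H^{-1}$ machinery in play that it reuses elsewhere (e.g., in Proposition~\ref{prop2}). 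One cosmetic remark: you do not need self-adjointness of $P_h$ to see that $\langle \kappa_h(g_h), v - P_h v\rangle_g = 0$; it follows directly from the defining relation of $P_h$ with test function $w_h = \kappa_h(g_h) \in V_h$.
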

\begin{proof}
For any $u_h \in V_h$, we have
\[
\langle \kappa_h(g_h) - \kappa(g), v - P_h v \rangle_g = \langle u_h -  \kappa(g), v - P_h v \rangle_g,
\]
so
\begin{align*}
|\langle \kappa_h(g_h) - \kappa(g), v - P_h v \rangle_g| 
&\le \| u_h - \kappa(g) \|_{H^{-1}(\Omega,g)} \|v - P_h v\|_{H^1(\Omega)} \\
&\le \| u_h - \kappa(g) \|_{H^{-1}(\Omega,g)} \left(\|v\|_{H^1(\Omega)} + \|P_h v\|_{H^1(\Omega)}\right) \\
&\le C \| u_h - \kappa(g) \|_{H^{-1}(\Omega,g)} \|v\|_{H^1(\Omega)}.
\end{align*}
Taking $u_h = P_h \kappa(g)$ and invoking the $H^{-1}(\Omega,g)$-estimate~(\ref{PherrHm1}) completes the proof.
\end{proof}

\begin{proposition} \label{prop2}
For $h$ sufficiently small, we have
\begin{dmath*}
|\langle \kappa_h(g_h), v_h \rangle_g - \langle \kappa_h(g_h), v_h \rangle_{g_h}|  \\ \le C  \log h^{-1} \left(1 + h^{-1} \|\kappa_h(g_h)-\kappa(g)\|_{H^{-1}(\Omega,g)}\right) \|g_h-g\|_{L^2(\Omega)} \|v_h\|_{H^1(\Omega)}.
\end{dmath*}
\end{proposition}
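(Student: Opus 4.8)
The plan is to estimate the difference between the two bilinear-form evaluations by exploiting the linearity of $\langle \kappa_h(g_h), v_h \rangle$ in the measure-dependent quantities. First I would write
\[
\langle \kappa_h(g_h), v_h \rangle_g - \langle \kappa_h(g_h), v_h \rangle_{g_h} = \int_\Omega \kappa_h(g_h) \, v_h \left( \sqrt{\det g} - \sqrt{\det g_h} \right) dx,
\]
which isolates the discrepancy into the single factor $\sqrt{\det g} - \sqrt{\det g_h}$. By Lemma~\ref{lemma:sqrtdetgWkp} (with $k=0$, $p=2$), this factor is controlled in $L^2$ by $\|g_h - g\|_{L^2(\Omega)}$, so the entire estimate will hinge on bounding the product $\kappa_h(g_h) v_h$ appropriately in a dual norm.

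The key difficulty, and the reason the $\log h^{-1}$ factor appears, is that $\kappa_h(g_h)$ is only controlled in negative-order norms a priori, while the integrand pairs it against $v_h (\sqrt{\det g} - \sqrt{\det g_h})$. The natural approach is to split $\kappa_h(g_h) = (\kappa_h(g_h) - \kappa(g)) + \kappa(g)$ and handle the two pieces separately. For the $\kappa(g)$ piece, I would use $\|\kappa(g)\|_{L^2(\Omega)}$ (which is one of the quantities $C$ may depend on) together with the $L^2$-bound on $\sqrt{\det g} - \sqrt{\det g_h}$ and an $L^\infty$-bound on $v_h$; converting $\|v_h\|_{L^\infty}$ to $\|v_h\|_{H^1}$ via a discrete Sobolev inequality in two dimensions is precisely what produces the logarithmic factor, since $\|v_h\|_{L^\infty(\Omega)} \le C \log^{1/2} h^{-1} \, \|v_h\|_{H^1(\Omega)}$ (or a variant thereof) for piecewise polynomial functions on a quasi-uniform mesh.

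For the $\kappa_h(g_h) - \kappa(g)$ piece I would instead invoke the $H^{-1}(\Omega,g)$-norm: pairing this difference against $v_h (\sqrt{\det g} - \sqrt{\det g_h})$ requires estimating the $H^1$-norm of the test function $v_h (\sqrt{\det g} - \sqrt{\det g_h})$. Here I would apply the product rule and Lemma~\ref{lemma:sqrtdetgWkp} (now using both $k=0$ and $k=1$) together with the inverse estimate $|v_h|_{W^{1,\infty}_h} \le C h^{-1} \|v_h\|_{L^\infty}$ and again the discrete Sobolev inequality, collecting the factors $h^{-1}$, $\log^{1/2} h^{-1}$, and $\|g_h - g\|_{L^2}$ (the latter converted from $L^\infty$-control where needed). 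This is the technically delicate step, as one must track which norms of $\sqrt{\det g} - \sqrt{\det g_h}$ are measured in $L^2$ versus $L^\infty$ and balance the powers of $h$ so that the final bound matches the stated $\log h^{-1}(1 + h^{-1}\|\kappa_h(g_h)-\kappa(g)\|_{H^{-1}(\Omega,g)})$ structure.

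The main obstacle I anticipate is correctly orchestrating the interplay between the inverse estimates, the logarithmic Sobolev bound, and the regularity of $\sqrt{\det g} - \sqrt{\det g_h}$ so as to isolate the $H^{-1}(\Omega,g)$-norm of $\kappa_h(g_h) - \kappa(g)$ as a separate additive contribution rather than folding it into an $L^2$-estimate; this separation is what makes the right-hand side amenable to later absorption into the left-hand side of the master estimate~\eqref{Hm1est} after the three propositions are combined.
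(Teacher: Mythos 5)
Your opening steps match the paper: the identity
\[
\langle \kappa_h(g_h), v_h \rangle_g - \langle \kappa_h(g_h), v_h \rangle_{g_h} = \int_\Omega \kappa_h(g_h)\, v_h \left( \sqrt{\det g} - \sqrt{\det g_h} \right) dx,
\]
the $L^2$-control of $\sqrt{\det g}-\sqrt{\det g_h}$ via Lemma~\ref{lemma:sqrtdetgWkp}, and the discrete Sobolev inequality $\|v_h\|_{L^\infty(\Omega)} \le C \log h^{-1}\, \|v_h\|_{H^1(\Omega)}$ producing the logarithm are all exactly as in the paper. The gap is in how you handle the $\kappa_h(g_h)-\kappa(g)$ contribution. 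You propose to pair it against $v_h(\sqrt{\det g}-\sqrt{\det g_h})$ using the $H^{-1}(\Omega,g)$-norm, which requires estimating the $H^1$-norm of that test function. This fails for two reasons. First, $g_h$ is a Regge finite element, so $\sqrt{\det g_h}$ is discontinuous across element interfaces; the proposed test function is not in $H^1_0(\Omega)$, and the duality~(\ref{Hm1norm}) does not apply to it. Second, even in a broken-$H^1$ sense, the product rule produces the term $v_h\,\nabla(\sqrt{\det g}-\sqrt{\det g_h})$, whose $L^2$-norm is controlled by $|g_h-g|_{H^1_h(\Omega)}$, not by $h^{-1}\|g_h-g\|_{L^2(\Omega)}$; this extra term is absent from the stated bound and, being multiplied by $\|\kappa_h(g_h)-\kappa(g)\|_{H^{-1}(\Omega,g)}$, could not be absorbed later, since the hypotheses only give $\sup_h |g_h-g|_{H^1_h(\Omega)} \le C$, not $\log h^{-1}\,|g_h-g|_{H^1_h(\Omega)} \to 0$.

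The paper avoids all of this: it keeps the crude H\"older bound $\|\kappa_h(g_h)\|_{L^2(\Omega)}\|v_h\|_{L^\infty(\Omega)}\|\sqrt{\det g}-\sqrt{\det g_h}\|_{L^2(\Omega)}$ and then separately proves $\|\kappa_h(g_h)\|_{L^2(\Omega)} \le C\left(1 + h^{-1}\|\kappa_h(g_h)-\kappa(g)\|_{H^{-1}(\Omega,g)}\right)$. The key point is to compare with $P_h\kappa(g)$ rather than with $\kappa(g)$ itself: $\kappa_h(g_h)-P_h\kappa(g)$ lies in $V_h$, so the inverse estimate $\|u_h\|_{H^1(\Omega)} \le Ch^{-1}\|u_h\|_{L^2(\Omega)}$ applies and, combined with $\|u_h\|_{L^2(\Omega,g)}^2 \le \|u_h\|_{H^{-1}(\Omega,g)}\|u_h\|_{H^1(\Omega)}$, yields $\|u_h\|_{L^2(\Omega)} \le Ch^{-1}\|u_h\|_{H^{-1}(\Omega,g)}$. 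The factor $h^{-1}$ thus comes from an inverse estimate on a finite element function, not from differentiating the test function. To repair your argument you should replace your splitting $\kappa_h(g_h) = (\kappa_h(g_h)-\kappa(g)) + \kappa(g)$ by $\kappa_h(g_h) = (\kappa_h(g_h)-P_h\kappa(g)) + P_h\kappa(g)$ and measure the first piece in $L^2$ via this inverse-estimate mechanism, then pass to $\|\kappa_h(g_h)-\kappa(g)\|_{H^{-1}(\Omega,g)}$ by the triangle inequality and~(\ref{PherrHm1}).
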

\begin{proof}
By Lemma~\ref{lemma:sqrtdetgWkp} and the inverse estimate $\|v_h\|_{L^\infty(\Omega)} \le C \log h^{-1} \|v_h\|_{H^1(\Omega)}$~\cite[Lemma 4.9.2]{brenner2007mathematical}, we have
\begin{align}
\big| \langle \kappa_h(g_h), v_h \rangle_g &- \langle \kappa_h(g_h), v_h \rangle_{g_h} \big| \nonumber \\
&= \left| \int_\Omega \kappa_h(g_h) v_h (\sqrt{\det g} - \sqrt{\det g_h}) \, dx \right| \nonumber \\
&\le \| \kappa_h(g_h) \|_{L^2(\Omega)} \|v_h\|_{L^\infty(\Omega)} \| \sqrt{\det g} - \sqrt{\det g_h}  \|_{L^2(\Omega)} \nonumber \\
&\le  C \log h^{-1} \| \kappa_h(g_h) \|_{L^2(\Omega)} \|v_h\|_{H^1(\Omega)} \| g_h - g \|_{L^2(\Omega)}. \label{bound1}
\end{align}  
Now note that
\begin{align*}
\| \kappa_h(g_h) \|_{L^2(\Omega)} 
&\le \| \kappa_h(g_h) - P_h \kappa(g) \|_{L^2(\Omega)} + \|P_h \kappa(g)\|_{L^2(\Omega)}. 
\end{align*}
The second term above is bounded by a constant (a multiple of $\|\kappa(g)\|_{L^2(\Omega)}$) owing to~(\ref{PherrL2}).  For the first term, an inverse estimate gives
\begin{align*}
\|\kappa_h(g_h)-P_h \kappa(g)\|_{L^2(\Omega)}^2 
&\le C \|\kappa_h(g_h)-P_h \kappa(g)\|_{L^2(\Omega,g)}^2  \\ 
&= C\langle \kappa_h(g_h) -P_h \kappa(g), \kappa_h(g_h)-P_h \kappa(g) \rangle_g \\
&\le C \|\kappa_h(g_h)-P_h \kappa(g)\|_{H^{-1}(\Omega,g)} \|\kappa_h(g_h)-P_h \kappa(g)\|_{H^1(\Omega)} \\
&\le C h^{-1} \|\kappa_h(g_h)-P_h \kappa(g)\|_{H^{-1}(\Omega,g)} \|\kappa_h(g_h)-P_h \kappa(g)\|_{L^2(\Omega)}.
\end{align*}
Thus, using the $H^{-1}(\Omega,g)$-estimate~(\ref{PherrHm1}), we obtain
\begin{align*}
\|\kappa_h(g_h)- & P_h \kappa(g)\|_{L^2(\Omega)} 
\le C h^{-1} \|\kappa_h(g_h)-P_h \kappa(g)\|_{H^{-1}(\Omega,g)} \\
&\le C \left( h^{-1}\| \kappa_h(g_h) - \kappa(g)\|_{H^{-1}(\Omega,g)} + h^{-1} \| \kappa(g) - P_h \kappa(g)\|_{H^{-1}(\Omega,g)} \right) \\
&\le C \left( h^{-1} \| \kappa_h(g_h) - \kappa(g)\|_{H^{-1}(\Omega,g)} + \inf_{u_h \in V_h} \| \kappa(g) - u_h\|_{L^2(\Omega)} \right) \\
&\le C \left( h^{-1} \| \kappa_h(g_h) - \kappa(g)\|_{H^{-1}(\Omega,g)} + 1 \right),
\end{align*}
for $h$ sufficiently small, since $\lim_{h \rightarrow 0} \inf_{u_h \in V_h} \| \kappa(g) - u_h\|_{L^2(\Omega)} = 0$~\cite[Equation 1.99]{ern2004theory}.  It follows that
\[
\|\kappa_h(g_h)\|_{L^2(\Omega)} \le C \left( h^{-1} \| \kappa_h(g_h) - \kappa(g)\|_{H^{-1}(\Omega,g)} + 1 \right).
\]
Combining this with~(\ref{bound1}) completes the proof.
\end{proof}

We will now estimate the first term in~(\ref{spliterror}).  We state the result below, and prove it with a series of lemmas.
\begin{proposition} \label{prop3}
We have
\begin{equation*}\begin{split}
| \langle \kappa_h(g_h), v_h \rangle_{g_h} & - \langle \kappa(g), v_h \rangle_g | \\
&\le C \left( h^{-1} \|g_h-g\|_{L^2(\Omega)} + |g_h-g|_{H^1_h(\Omega)} \right) \left( |v_h|_{H^1(\Omega)} + h |v_h|_{H^2_h(\Omega)} \right).
\end{split}\end{equation*}
\end{proposition}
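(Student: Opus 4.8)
The plan is to combine Definition~\ref{def:kappah} with the consistency identity of Lemma~\ref{lemma:kappaint} so as to rewrite the quantity of interest as a single $t$-integral of a difference of bilinear forms. Writing $G(t) = (1-t)\delta + tg$ and $G_h(t) = (1-t)\delta + tg_h$, and using that $b_h(g;\cdot,v)$ is linear in its tensor slot, I would decompose the integrand as
\begin{equation*}
b_h(G_h(t); g_h-\delta, v_h) - b_h(G(t); g-\delta, v_h) = \underbrace{b_h(G(t); g_h - g, v_h)}_{\textup{(I)}} + \underbrace{\big[ b_h(G_h(t); g_h-\delta, v_h) - b_h(G(t); g_h-\delta, v_h) \big]}_{\textup{(II)}}.
\end{equation*}
Term (I) isolates the perturbation in the tensor argument against the \emph{smooth} reference metric $G(t)$, while term (II) isolates the perturbation of the metric argument. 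Since $G(t)$ and $G_h(t)$ are convex combinations of $\delta$ with $g$ and $g_h$, the basic estimates of Section~\ref{sec:basic} furnish eigenvalue bounds, together with bounds on the densities, inverses, and (elementwise, for $G_h(t)$) first derivatives, all uniform in $t \in [0,1]$ and $h$. Consequently the maps $g \mapsto g^{-1}$, $\sqrt{\det g}$, $\tau_g$, $n_g$, and $S_g\sigma$, together with their first derivatives in $g$, are uniformly Lipschitz over the relevant range of metrics, and $\sigma = g_h-\delta$ is uniformly bounded in $L^\infty$ and $W^{1,\infty}_h$. These facts allow the final bound to be integrated trivially over $t$.

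For term (I) I would split the two summands in the definition of $b_h$. The volume sum $\sum_K \langle S_{G(t)}(g_h-g), \Hess_{G(t)} v_h \rangle_{G(t),K}$ has all metric-dependent coefficients bounded in $L^\infty$, and $\Hess_{G(t)} v_h$ consists of $\Hess v_h$ minus bounded Christoffel symbols contracted with $\nabla v_h$; pairing $g_h-g$ against $v_h$ by Cauchy--Schwarz on each $K$ yields $C\|g_h-g\|_{L^2(\Omega)}\big(|v_h|_{H^2_h(\Omega)} + |v_h|_{H^1(\Omega)}\big)$. For the edge sum I would use the scaled trace inequality $\|\tau^T(g_h-g)\tau\|_{L^2(e)} \le C(h^{-1/2}\|g_h-g\|_{L^2(K)} + h^{1/2}|g_h-g|_{H^1(K)})$ together with the inverse trace estimate $\|\llbracket \partial v_h/\partial n_{G(t)} \rrbracket\|_{L^2(e)} \le C h^{-1/2}|v_h|_{H^1}$ on the neighboring triangles, followed by Cauchy--Schwarz over edges (with finite overlap), producing $C\big(h^{-1}\|g_h-g\|_{L^2(\Omega)} + |g_h-g|_{H^1_h(\Omega)}\big)|v_h|_{H^1(\Omega)}$. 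Shape-regularity and quasi-uniformity let me replace each $h_K$ by $h$ throughout.

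Term (II) is the heart of the argument and the step I expect to be the main obstacle, because $b_h$ depends nonlinearly on its metric argument through $g^{-1}$, the Christoffel symbols hidden in $\Hess_g$, the frame $(\tau_g, n_g)$, the density $\sqrt{\det g}$, and the inner products $\langle \cdot,\cdot \rangle_{g,K}$ and $\langle \cdot,\cdot \rangle_{g,e}$. My strategy is to expand the difference of the two volume integrands as a telescoping sum in which one factor at a time is switched from its $G(t)$-value to its $G_h(t)$-value. Every factor difference other than the Hessian is bounded pointwise by $C|g_h-g|$ via Lipschitz dependence (the identities underlying Lemmas~\ref{lemma:ghinv} and~\ref{lemma:sqrtdetgWkp}) and is paired with the $L^2$ quantity $\Hess v_h$ by Cauchy--Schwarz, giving $C\|g_h-g\|_{L^2(\Omega)}|v_h|_{H^2_h(\Omega)}$; the single factor difference $\Hess_{G_h(t)} v_h - \Hess_{G(t)} v_h = -\big(\Gamma(G_h(t)) - \Gamma(G(t))\big)\nabla v_h$ involves $\nabla(g_h-g)$ as well as $g_h-g$, since the Christoffel symbols contain $g^{-1}\nabla g$, and is therefore bounded by $C\big(\|g_h-g\|_{L^2(\Omega)} + |g_h-g|_{H^1_h(\Omega)}\big)|v_h|_{H^1(\Omega)}$. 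This is precisely where the broken $H^1$ seminorm of $g_h-g$ enters. For the edge difference I would write, with $P(g') = \tau^T(g_h-\delta)\tau / \sqrt{\tau^T g'\tau}$ single-valued across $e$,
\begin{equation*}
P(G_h)\llbracket \partial v_h/\partial n_{G_h} \rrbracket - P(G)\llbracket \partial v_h/\partial n_{G} \rrbracket = \big(P(G_h)-P(G)\big)\llbracket \partial v_h/\partial n_{G_h} \rrbracket + P(G)\llbracket (n_{G_h}-n_{G})^T \nabla v_h \rrbracket,
\end{equation*}
and then bound $|P(G_h)-P(G)| \le C|\tau^T(g_h-g)\tau|$ and $|n_{G_h}-n_{G}| \le C|g_h-g|$ pointwise, reusing the same trace and inverse-trace estimates as in term (I) to obtain $C\big(h^{-1}\|g_h-g\|_{L^2(\Omega)} + |g_h-g|_{H^1_h(\Omega)}\big)|v_h|_{H^1(\Omega)}$. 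Collecting (I) and (II), integrating over $t$, and absorbing the term $\|g_h-g\|_{L^2(\Omega)}|v_h|_{H^2_h(\Omega)}$ into $h^{-1}\|g_h-g\|_{L^2(\Omega)} \cdot h\,|v_h|_{H^2_h(\Omega)}$ yields the claimed estimate. The difficulty throughout is essentially organizational: keeping the algebraic metric perturbations in $\|g_h-g\|_{L^2(\Omega)}$ and the differentiated ones in $|g_h-g|_{H^1_h(\Omega)}$, while ensuring each edge term produces exactly the $h^{-1/2}$ scaling needed for the stated powers of $h$.
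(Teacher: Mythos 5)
Your proposal is correct and follows essentially the same route as the paper: the same splitting of $b_h(G_h;g_h-\delta,v_h)-b_h(G;g-\delta,v_h)$ into the tensor-perturbation term $b_h(G;g_h-g,v_h)$ and the metric-perturbation term $b_h(G_h;g_h-\delta,v_h)-b_h(G;g_h-\delta,v_h)$, each bounded by telescoping the Lipschitz metric-dependent coefficients, scaled trace inequalities on the edge terms, and Cauchy--Schwarz on the volume terms, with $|g_h-g|_{H^1_h(\Omega)}$ entering through the Christoffel symbols exactly as you identify. These are the paper's Lemma~\ref{lemma:bhdiff} and the lemma following it, so no further comparison is needed.
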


To prove Proposition~\ref{prop3}, let $G(t) = (1-t)\delta + tg$ and $G_h(t)=(1-t)\delta+tg_h$, so that
\begin{equation*}\begin{split}
\langle \kappa_h(g_h), v_h \rangle_{g_h} - \langle \kappa(g), v_h \rangle_g
= \frac{1}{2}\int_0^1 b_h(G_h(t); g_h - \delta, v_h) - b_h(G(t); g_h-\delta, v_h) \, dt \\ + \frac{1}{2} \int_0^1 b_h(G(t); g_h-g, v_h) \, dt.
\end{split}\end{equation*}
Note that $G_h(t)$, being a convex combination of $\delta$ and $g_h$, has eigenvalues bounded above and below by positive constants independent of $h$ (and $t$), and all of the estimates we established for the norms of $g_h-g$, $g_h^{-1}-g^{-1}$, $\sqrt{\det g_h} - \sqrt{\det g}$, etc. carry over easily to $G_h-G$, $G_h^{-1}-G^{-1}$, $\sqrt{\det G_h} - \sqrt{\det G}$, etc.

\begin{lemma} \label{lemma:bhdiff}
For every $t \in [0,1]$,
\begin{equation*}\begin{split}
|b_h(G_h(t); g_h - \delta, v_h) &- b_h(G(t); g_h - \delta, v_h)| \\
&\le C \left( h^{-1} \|g_h-g\|_{L^2(\Omega)} + |g_h-g|_{H^1_h(\Omega)} \right) \left( |v_h|_{H^1(\Omega)} + h |v_h|_{H^2_h(\Omega)} \right).
\end{split}\end{equation*}
\end{lemma}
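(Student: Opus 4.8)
The plan is to estimate the two parts of $b_h(g;\sigma,v)$ separately, with the arguments $\sigma = g_h-\delta$ and $v = v_h$ held fixed and the metric ranging over $G(t)$ and $G_h(t)$, which differ by $G_h(t)-G(t) = t(g_h-g)$. I split
\[
b_h(g;\sigma,v) = \sum_{K}\langle S_g\sigma,\Hess_g v\rangle_{g,K} + \sum_{e}\Big\langle \tau_g^T\sigma\tau_g,\Big\llbracket\tfrac{\partial v}{\partial n_g}\Big\rrbracket\Big\rangle_{g,e}
\]
and estimate the difference term by term. Throughout I rely on the facts noted just before the lemma: $G$ and $G_h$ have eigenvalues bounded above and below uniformly in $h$ and $t$, and the basic estimates of Section~\ref{sec:basic} (in particular Lemmas~\ref{lemma:ghinv} and~\ref{lemma:sqrtdetgWkp}) carry over to them. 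Consequently the algebraic metric coefficients $g^{-1}$, $\sqrt{\det g}$, $\tau_g$, $n_g$, and $\sqrt{\tau^T g\tau}$ are Lipschitz functions of $g$ on the relevant range of symmetric matrices, so their $G_h$--$G$ differences are bounded pointwise by $C|(g_h-g)(x)|$. I also use that $\|g_h-\delta\|_{L^\infty(\Omega)}$ is bounded and that $\partial g_h$ is bounded in $L^\infty$ on each element (by $C_0$).

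For the volume term I write $\Hess_g v = \Hess v - \Gamma^k(g)\,\partial_k v$, separating the purely algebraic dependence on $g$ from the dependence through the Christoffel symbols, which carry one derivative of $g$. In the $\Hess v$ piece the entire coefficient $g^{-1}(S_g\sigma)g^{-1}\sqrt{\det g}$ is an algebraic, hence pointwise Lipschitz, function of $g$ (linear in the fixed $\sigma$), so its $G_h$--$G$ difference is bounded by $C|(g_h-g)(x)|$ and, after Cauchy--Schwarz, this piece contributes at most $C\|g_h-g\|_{L^2(\Omega)}|v_h|_{H^2_h(\Omega)} = C\,h^{-1}\|g_h-g\|_{L^2}\cdot h|v_h|_{H^2_h}$, which lies inside the target. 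In the Christoffel piece I expand the difference of $\Gamma^k(G_h)\partial_k v_h$ and $\Gamma^k(G)\partial_k v_h$ into a part where the algebraic coefficients differ (multiplied by $|\partial G_h|\le CC_0$) and a part where the derivative factor differs ($\partial(G_h-G)=t\,\partial(g_h-g)$); using $\|g_h^{-1}-g^{-1}\|_{L^p}\le C\|g_h-g\|_{L^p}$ from Lemma~\ref{lemma:ghinv} and the $L^\infty$ bound on $\partial g_h$, this piece is controlled by $C\big(\|g_h-g\|_{L^2}+|g_h-g|_{H^1_h}\big)|v_h|_{H^1}$, again within the target for $h$ small.

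For the edge term all metric dependence is algebraic (no derivatives of $g$ appear), so writing the integrand as $c(g)\,\llbracket d(g)\rrbracket$ with $c(g)=\tau^T\sigma\tau/\sqrt{\tau^T g\tau}$ and $d(g)=n_g^T\nabla v$, its $G_h$--$G$ difference is bounded pointwise on each edge by $C|(g_h-g)(x)|\,|\llbracket\nabla v_h\rrbracket|$. The essential step here, and the one I expect to be the main obstacle, is converting $\sum_e\int_e|(g_h-g)|\,|\llbracket\nabla v_h\rrbracket|\,d\ell$ into interior norms with exactly the right powers of $h$. I apply Cauchy--Schwarz on each edge and then the scaled trace inequality $\|w\|_{L^2(e)}^2\le C(h_K^{-1}\|w\|_{L^2(K)}^2+h_K|w|_{H^1(K)}^2)$ to both factors, using that $g_h-g$ is elementwise $H^1$ (as $g\in\mathcal{M}^{2,\infty}$) and that $\nabla v_h$ is polynomial. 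The product of the two trace bounds expands, after quasi-uniformity $h_K\approx h$ and Cauchy--Schwarz in the element index, into exactly
\[
h^{-1}\|g_h-g\|_{L^2}|v_h|_{H^1}+\|g_h-g\|_{L^2}|v_h|_{H^2_h}+|g_h-g|_{H^1_h}|v_h|_{H^1}+h|g_h-g|_{H^1_h}|v_h|_{H^2_h},
\]
which is precisely the expansion of $\big(h^{-1}\|g_h-g\|_{L^2}+|g_h-g|_{H^1_h}\big)\big(|v_h|_{H^1}+h|v_h|_{H^2_h}\big)$. Collecting the volume and edge estimates, and using $\|g_h-g\|_{L^2}|v_h|_{H^1}\le h^{-1}\|g_h-g\|_{L^2}|v_h|_{H^1}$ for $h$ small, yields the claimed bound.
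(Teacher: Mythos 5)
Your proposal is correct and follows essentially the same route as the paper: the same split of $b_h$ into volume and edge contributions, the same isolation of the Christoffel symbols as the only source of the $|g_h-g|_{H^1_h(\Omega)}$ term, and the same scaled trace inequality applied to both $\nabla v_h$ and $g_h-g$ on each edge to produce the four cross terms. The only difference is presentational: where you invoke Lipschitz dependence of the algebraic coefficients on the metric, the paper writes out the explicit telescoping identities (e.g.\ for $\widetilde{G}-\widetilde{G}_h$ and the coefficients $a^{\ell k}$), which amounts to the same estimate.
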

\begin{proof}
Let $\sigma = g_h -\delta$.  We have
\[
b_h(G_h(t); \sigma, v_h) - b_h(G(t); \sigma, v_h) = \frac{1}{2} \left( \sum_{e \in \mathcal{E}_h} A_e + \sum_{K \in \mathcal{T}_h} B_K \right),
\]
where
\begin{align*}
A_e &= \left\langle \tau_{G_h}^T \sigma \tau_{G_h}, \bigg\llbracket \frac{\partial v_h}{\partial n_{G_h}} \bigg\rrbracket \right\rangle_{G_h,e}  - \left\langle \tau_G^T \sigma \tau_G, \bigg\llbracket \frac{\partial v_h}{\partial n_G} \bigg\rrbracket \right\rangle_{G,e},  \\
B_K &= \langle S_{G_h} \sigma, \Hess_{G_h} v_h \rangle_{G_h,K} - \langle S_G \sigma, \Hess_G v_h \rangle_{G,K}.
\end{align*}

Along any edge $e \in \mathcal{E}_h$, we may write
\[
\left\langle \tau_G^T \sigma \tau_G, \left\llbracket \frac{\partial v_h}{\partial n_G} \right\rrbracket \right\rangle_{G,e} = \int_e \tau^T \sigma \tau \llbracket \nabla v_h^T J G (\det G)^{-1/2} (\tau^T G \tau)^{-1} \tau \rrbracket  \, d\ell,
\]
so, with $\widetilde{G} = G (\det G)^{-1/2} (\tau^T G \tau)^{-1}$ and $\widetilde{G}_h = G_h (\det G_h)^{-1/2} (\tau^T G_h \tau)^{-1}$, we have 
\[
A_e = \int_e \tau^T \sigma \tau \llbracket \nabla v_h^T J (\widetilde{G}_h - \widetilde{G}) \tau \rrbracket \, d\ell.
\]
Writing
\begin{align*}
\widetilde{G} - \widetilde{G}_h &= G (\det G)^{-1/2} \left( \frac{ (\tau^T G_h \tau) - (\tau^T G \tau) }{ (\tau^T G_h \tau) (\tau^T G \tau) } \right) 
\\&\;\;\; +  G \left( \frac{\sqrt{\det G_h} - \sqrt{\det G}}{\sqrt{\det G_h}\sqrt{\det G}} \right) (\tau^T G_h \tau)^{-1} 
\\&\;\;\; + (G-G_h)(\det G_h)^{-1/2} (\tau^T G_h \tau)^{-1} 
\end{align*}
shows that for each $K_j \in \mathcal{T}_h$ with $e \subset \partial K_j$,
\[
\| (\widetilde{G} - \widetilde{G}_h)\big|_{\partial K_j} \|_{L^2(e)} \le C \| \left.(G_h-G)\right|_{\partial K_j} \|_{L^2(e)}.
\]
The trace inequality then gives, if $e \in \mathring{\mathcal{E}}_h$,
\begin{align}
|A_e|
&\le C\|\tau^T \sigma \tau\|_{L^\infty(e)} \left( \left\| \left.\nabla v_h\right|_{\partial K_1} \right\|_{L^2(e)} +  \left\| \left.\nabla v_h\right|_{\partial K_2} \right\|_{L^2(e)} \right) \nonumber\\
&\;\;\; \times \left( \| (G-G_h)\big|_{\partial K_1} \|_{L^2(e)} + \| (G-G_h)\big|_{\partial K_2} \|_{L^2(e)} \right) \nonumber\\
&\le C\|\tau^T \sigma \tau\|_{L^\infty(e)} \bigg( h^{-1/2} |v_h|_{H^1(K_1)} + h^{1/2} |v_h|_{H^2(K_1)} \nonumber\\&\hspace{0.5in} + h^{-1/2} |v_h|_{H^1(K_2)} + h^{1/2} |v_h|_{H^2(K_2)} \bigg) \nonumber\\
&\;\;\; \times \bigg( h^{-1/2} \|G_h - G\|_{L^2(K_1)} + h^{1/2} |G_h-G|_{H^1(K_1)} \nonumber\\&\hspace{0.5in} + h^{-1/2} \|G_h-G\|_{L^2(K_2)} + h^{1/2} |G_h-G|_{H^1(K_2)} \bigg). \label{edgeintbound}
\end{align}
For edges $e$ on $\partial \Omega$, the same estimate holds with terms involving $K_2$ replaced by zero.

Turning now to $B_K$, observe that
\begin{align}
\langle S_G \sigma, \Hess_G v_h \rangle_{G,K}
&= \langle \sigma - G \Tr(G^{-1}\sigma), \Hess_G v_h \rangle_{G,K} \nonumber \\
&= \int_K \Tr( G^{-1} \sigma G^{-1} \Hess_G v_h) \sqrt{\det G} \, dx 
\\&\;\;\; - \int_K \Tr( G^{-1} \sigma) \Tr( G^{-1} \Hess_G v_h ) \sqrt{\det G} \, dx. \label{SGsplit}
\end{align}
Observe also that
\[
\Tr( G^{-1} \sigma G^{-1} \Hess_G v_h) = \Tr( \sigma G^{-1} (\Hess_G v_h) G^{-1} ) = \sigma_{\ell k}{ a^{\ell k} },
\]
where
\begin{align*}
a^{\ell k} = G^{\ell i} \left( \frac{\partial^2 v_h}{\partial x^i \partial x^j} - \Gamma^m_{ij} \frac{\partial v_h}{\partial x^m} \right) G^{jk},
\end{align*}
and $\Gamma_{ij}^m$ are the Christoffel symbols associated with $G$.
If $a_h^{\ell k}$ is defined similarly with $G_h$ in place of $G$, then it is not hard to see that
\[
\|a_h^{\ell k} - a^{\ell k}\|_{L^1(K)} \le C \left( \|G_h-G\|_{L^2(K)}  |v_h|_{H^2(K)} + \|G_h-G\|_{H^1(K)}  |v_h|_{H^1(K)} \right)
\]
and
\[
\|a^{\ell k}\|_{L^2(K)} \le C \left( |v_h|_{H^2(K)} + |v_h|_{H^1(K)} \right),
\]
so
\begin{align*}
\bigg| \int_K & \Tr( G_h^{-1} \sigma G_h^{-1} \Hess_{G_h} v_h) \sqrt{\det G_h} \, dx  - \int_K \Tr( G^{-1} \sigma G^{-1} \Hess_G v_h) \sqrt{\det G} \, dx  \bigg| \\
&\le \left| \int_K \sigma_{\ell k} (a_h^{\ell k} - a^{\ell k})  \sqrt{\det G_h} \, dx\right|  + \left| \int_K \sigma_{\ell k} a^{\ell k} \left(\sqrt{\det G_h} - \sqrt{\det G}\right) \, dx \right| \\
&\le C \|\sigma\|_{L^\infty(K)} \left( \|G_h-G\|_{L^2(K)}  |v_h|_{H^2(K)} + \|G_h-G\|_{H^1(K)}  |v_h|_{H^1(K)} \right).
\end{align*}
Using a similar argument for the second term in~(\ref{SGsplit}), we get
\begin{equation*}\begin{split}
\left| \int_K \Tr( G_h^{-1} \sigma) \Tr( G_h^{-1} \Hess_{G_h} v_h ) \sqrt{\det G_h} \, dx - \int_K \Tr( G^{-1} \sigma) \Tr( G^{-1} \Hess_G v_h ) \sqrt{\det G} \, dx \right| \\
\le C \|\sigma\|_{L^\infty(K)} \left( \|G_h-G\|_{L^2(K)}  |v_h|_{H^2(K)} + \|G_h-G\|_{H^1(K)}  |v_h|_{H^1(K)} \right).
\end{split}\end{equation*}
Hence,
\begin{equation} \label{BKbound}
|B_K| \le C \|\sigma\|_{L^\infty(K)} \left( \|G_h-G\|_{L^2(K)}  |v_h|_{H^2(K)} + \|G_h-G\|_{H^1(K)}  |v_h|_{H^1(K)} \right).
\end{equation}
The proof is completed by summing~(\ref{edgeintbound}) over all edges $e$, summing~(\ref{BKbound}) over all triangles $K$, and noting that $\|\sigma\|_{L^\infty(\Omega)} = \|g_h-\delta\|_{L^\infty(\Omega)} \le C$ and $G_h-G = t (g_h-g)$.
\end{proof}

\begin{lemma}
For every $t \in [0,1]$,
\[
|b_h(G(t);g_h-g,v_h)| \le C \left( h^{-1} \|g_h-g\|_{L^2(\Omega)} + |g_h-g|_{H^1_h(\Omega)} \right) \left( |v_h|_{H^1(\Omega)} + h |v_h|_{H^2_h(\Omega)} \right).
\]
\end{lemma}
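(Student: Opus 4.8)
The plan is to estimate the volume and edge parts of $b_h(G(t);g_h-g,v_h)$ separately, writing $\sigma = g_h-g$. The starting point is that every geometric quantity built from $G = G(t) = (1-t)\delta + tg$ is controlled uniformly in $t \in [0,1]$: being a convex combination of $\delta$ and $g$, the matrix $G(t)$ has eigenvalues bounded above and below by positive constants independent of $t$, and since $g \in \mathcal{M}^{2,\infty}(\Omega)$ we have $\|G\|_{W^{1,\infty}(\Omega)} + \|G^{-1}\|_{L^\infty(\Omega)} \le C$ uniformly in $t$. Hence $\sqrt{\det G}$, $\tau_G$, $n_G$, and the Christoffel symbols of $G$ are all bounded in $L^\infty(\Omega)$ by $t$-independent constants, and it suffices to produce a $t$-independent bound.

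For the volume contribution $\sum_{K}\langle S_G\sigma,\Hess_G v_h\rangle_{G,K}$ I would bound each summand directly. Since $S_G\sigma = \sigma - G\,\Tr(G^{-1}\sigma)$ is controlled pointwise by $\sigma$, and the boundedness of the Christoffel symbols of $G$ gives $\|\Hess_G v_h\|_{L^2(K)} \le C(|v_h|_{H^2(K)} + |v_h|_{H^1(K)})$, Cauchy--Schwarz on each $K$ yields $|\langle S_G\sigma,\Hess_G v_h\rangle_{G,K}| \le C\|\sigma\|_{L^2(K)}(|v_h|_{H^2(K)}+|v_h|_{H^1(K)})$. Summing over $K$ with the discrete Cauchy--Schwarz inequality produces $C\|\sigma\|_{L^2(\Omega)}(|v_h|_{H^1(\Omega)}+|v_h|_{H^2_h(\Omega)})$, which for $h \le 1$ is dominated by the right-hand side of the claim, since $\|\sigma\|_{L^2(\Omega)}|v_h|_{H^1(\Omega)} \le h^{-1}\|\sigma\|_{L^2(\Omega)}|v_h|_{H^1(\Omega)}$ and $\|\sigma\|_{L^2(\Omega)}|v_h|_{H^2_h(\Omega)} = (h^{-1}\|\sigma\|_{L^2(\Omega)})(h|v_h|_{H^2_h(\Omega)})$.

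The edge sum $\sum_{e}\langle \tau_G^T\sigma\tau_G, \llbracket \partial v_h/\partial n_G\rrbracket\rangle_{G,e}$ is the crux. The key preliminary observation is that on any interior edge the scalar $\tau^T\sigma\tau$ is single-valued: $\tau^T g_h\tau$ is continuous across interior edges because $g_h \in \Sigma_h$, and $g$ is continuous, so $\tau^T\sigma\tau = \tau^T g_h\tau - \tau^T g\tau$ has a well-defined trace there. I would then apply Cauchy--Schwarz on each edge together with the scaled trace inequality $\|w\|_{L^2(e)}^2 \le C(h^{-1}\|w\|_{L^2(K)}^2 + h|w|_{H^1(K)}^2)$, applied to $\tau^T\sigma\tau$ from one adjacent triangle $K_1$ and to each one-sided normal derivative $\partial v_h/\partial n_G|_{K_j} = n_{G}^T\nabla v_h|_{K_j}$ (which is bounded pointwise by $C|\nabla v_h|$). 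For an interior edge $e = K_1\cap K_2$ this gives
\[
\left|\langle \tau_G^T\sigma\tau_G, \llbracket \partial v_h/\partial n_G\rrbracket\rangle_{G,e}\right| \le C\,(h^{-1/2}\|\sigma\|_{L^2(K_1)}+h^{1/2}|\sigma|_{H^1(K_1)})\sum_{j=1,2}(h^{-1/2}|v_h|_{H^1(K_j)}+h^{1/2}|v_h|_{H^2(K_j)}).
\]
Expanding the product produces exactly the four scalings $h^{-1}\|\sigma\|_{L^2}|v_h|_{H^1}$, $\|\sigma\|_{L^2}|v_h|_{H^2}$, $|\sigma|_{H^1}|v_h|_{H^1}$, and $h|\sigma|_{H^1}|v_h|_{H^2}$, each of which is one of the four cross terms of $(h^{-1}\|\sigma\|_{L^2(\Omega)}+|\sigma|_{H^1_h(\Omega)})(|v_h|_{H^1(\Omega)}+h|v_h|_{H^2_h(\Omega)})$. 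Summing over all edges, using that each triangle meets at most three edges (finite overlap) and discrete Cauchy--Schwarz, and recalling $|v_h|_{H^1_h(\Omega)} = |v_h|_{H^1(\Omega)}$, delivers the edge contribution in the desired form; boundary edges are handled identically with the $K_2$-terms set to zero.

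Combining the volume and edge estimates proves the lemma with constants independent of $t$. I expect the edge term to be the main obstacle: one must track carefully the half-powers of $h$ supplied by the trace inequality so that each of the four cross terms lands inside the target product, and one must use the single-valuedness of $\tau^T\sigma\tau$ on interior edges together with the finite-overlap summation. The volume term, by contrast, is routine once the $t$-uniform $L^\infty$ control of $G^{-1}$ and of the Christoffel symbols of $G$ is established.
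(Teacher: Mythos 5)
Your proof is correct and follows essentially the same route as the paper: the same split of $b_h(G(t);\sigma,v_h)$ into volume and edge sums, the same per-element Cauchy--Schwarz bound $C\|\sigma\|_{L^2(K)}(|v_h|_{H^1(K)}+|v_h|_{H^2(K)})$ for the Hessian term, and the same scaled trace inequality on each edge producing the factor $(h^{-1/2}\|\sigma\|_{L^2(K_1)}+h^{1/2}|\sigma|_{H^1(K_1)})$ times the corresponding $v_h$-factors. Your explicit remarks on the single-valuedness of $\tau^T\sigma\tau$ across interior edges and on the $t$-uniformity of the constants are points the paper leaves implicit, but they do not change the argument.
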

\begin{proof}
Let $\sigma = g_h - g$, so that
\[
b_h(G(t);g_h-g,v_h) = \sum_K \langle \sigma, \Hess_G v_h \rangle_{G,K} + \sum_{e \in \mathcal{E}_h} \left\langle \tau_G^T \sigma \tau_G, \left\llbracket \frac{\partial v_h}{\partial n_G} \right\rrbracket \right\rangle_{G,e}.
\]
As was shown in the proof of Lemma~\ref{lemma:bhdiff}, we have
\[
\left\langle \tau_G^T \sigma \tau_G, \left\llbracket \frac{\partial v_h}{\partial n_G} \right\rrbracket \right\rangle_{G,e} = \int_e \tau^T \sigma \tau \llbracket \nabla v_h^T J \widetilde{G} \tau \rrbracket  \, d\ell,
\]
where $\widetilde{G} = G (\det G)^{-1/2} (\tau^T G \tau)^{-1}$.
Thus, if $e = K_1 \cap K_2 \in \mathring{\mathcal{E}}_h$, then the trace inequality gives
\begin{align}
\bigg| \bigg\langle \tau_G^T \sigma \tau_G,  & \left\llbracket \frac{\partial v_h}{\partial n_G} \right\rrbracket \bigg\rangle_{G,e} \bigg| 
\le C\|\tau^T \sigma \tau\|_{L^2(e)} \left( \left\| \left.\nabla v_h\right|_{\partial K_1} \right\|_{L^2(e)} +  \left\| \left.\nabla v_h\right|_{\partial K_2} \right\|_{L^2(e)} \right) \nonumber\\
&\le C \left( h^{-1/2} \|\sigma\|_{L^2(K_1)} + h^{1/2} |\sigma|_{H^1(K_1)}  \right) \nonumber \\
&\;\;\; \times \left( h^{-1/2} |v_h|_{H^1(K_1)} + h^{1/2} |v_h|_{H^2(K_1)} + h^{-1/2} |v_h|_{H^1(K_2)} + h^{1/2} |v_h|_{H^2(K_2)} \right), \nonumber
\end{align}
and similarly for edges $e$ on $\partial \Omega$.
On the other hand, 
\[
|\langle \sigma, \Hess_G v_h \rangle_{G,K} | \le C \|\sigma\|_{L^2(K)} \left( |v_h|_{H^1(K)} + |v_h|_{H^2(K)} \right).
\]
The conclusion follows from summing over all edges $e$ and all triangles $K$.
\end{proof}

We will now finish the proof of Theorem~\ref{thm:err}.  Taking $v_h = P_h v$ and combining Propositions~\ref{prop1},~\ref{prop2}, and~\ref{prop3} with the inverse estimate $|v_h|_{H^2_h(\Omega)} \le C h^{-1} \|v_h\|_{H^1(\Omega)}$, the stability estimate $\|v_h\|_{H^1(\Omega)} \le C \|v\|_{H^1(\Omega)}$, and the upper bound $\log h^{-1} \le h^{-1}$, we obtain
\begin{equation*}\begin{split}
|\langle \kappa_h(g_h) - \kappa(g), v \rangle_g| &\le C \bigg( h^{-1} \|g_h-g\|_{L^2(\Omega)} + |g_h-g|_{H^1_h(\Omega)} + h \inf_{u_h \in V_h} \|\kappa(g) - u_h\|_{L^2(\Omega)} \\
&\hspace{0.1in} + h^{-1}\log h^{-1} \|g_h-g\|_{L^2(\Omega)} \|\kappa_h(g_h)-\kappa(g)\|_{H^{-1}(\Omega,g)} \bigg) \|v\|_{H^1(\Omega)}.
\end{split}\end{equation*}
Taking the supremum over $v \in H^1_0(\Omega)$ and rearranging gives
\begin{equation*}\begin{split}
\big(1-C h^{-1} \log h^{-1} & \|g_h-g\|_{L^2(\Omega)}\big) \|\kappa_h(g_h)-\kappa(g)\|_{H^{-1}(\Omega,g)} \\
&\le C \bigg( h^{-1} \|g_h-g\|_{L^2(\Omega)} + |g_h-g|_{H^1_h(\Omega)} + h \inf_{u_h \in V_h} \|\kappa(g) - u_h\|_{L^2(\Omega)} \bigg).
\end{split}\end{equation*}
Since we have assumed that $\lim_{h \rightarrow 0} h^{-1} \log h^{-1} \|g_h-g\|_{L^2(\Omega)} = 0$, we may divide by $\left(1-C h^{-1} \log h^{-1} \|g_h-g\|_{L^2(\Omega)}\right)$ for $h$ sufficiently small to arrive at the $H^{-1}(\Omega,g)$-estimate in~(\ref{Hm1est}).  

To obtain the $L^2(\Omega)$-estimate in~(\ref{Hm1est}), observe that for any $v \in L^2(\Omega)$, we have
\begin{align*}
\langle & \kappa_h(g_h)  - \kappa(g), v \rangle_g
= \langle \kappa_h(g_h) - \kappa(g), v-P_h v \rangle_g + \langle \kappa_h(g_h) - \kappa(g), P_h v \rangle_g \\
&= \langle P_h \kappa(g) - \kappa(g), v-P_h v \rangle_g + \langle \kappa_h(g_h) - \kappa(g), P_h v \rangle_g \\
&\le \| P_h \kappa(g) - \kappa(g) \|_{L^2(\Omega,g)} \| v-P_h v \|_{L^2(\Omega,g)} + \|\kappa_h(g_h) - \kappa(g)\|_{H^{-1}(\Omega,g)} \|P_h v\|_{H^1(\Omega)} \\
&\le C \left( \| P_h \kappa(g) - \kappa(g) \|_{L^2(\Omega)} \| v \|_{L^2(\Omega,g)} + \|\kappa_h(g_h) - \kappa(g)\|_{H^{-1}(\Omega,g)} \|P_h v\|_{H^1(\Omega)} \right). 
\end{align*}
Now since 
\[
\|P_h v\|_{H^1(\Omega)} \le C h^{-1} \|P_h v\|_{L^2(\Omega)} \le C h^{-1} \|P_h v\|_{L^2(\Omega,g)} \le C h^{-1} \|v\|_{L^2(\Omega,g)},
\]
we deduce that
\begin{align}
\|  \kappa_h(g_h) - \kappa(g) \|_{L^2(\Omega)} 
&\le C \|  \kappa_h(g_h) - \kappa(g) \|_{L^2(\Omega,g)} \nonumber \\
&= C \sup_{v \in L^2(\Omega,g)} \frac{ \langle \kappa_h(g_h) - \kappa(g), v \rangle_g}{\|v\|_{L^2(\Omega,g)}} \nonumber \\
&\le C \left( \inf_{u_h \in V_h} \|\kappa(g)-u_h\|_{L^2(\Omega)} + h^{-1}  \|\kappa_h(g_h) - \kappa(g)\|_{H^{-1}(\Omega,g)} \right). \nonumber
\end{align}
Invoking the $H^{-1}(\Omega,g)$-estimate in~(\ref{Hm1est}) gives
\begin{equation} \label{L2est}
\begin{split}
\|  \kappa_h(g_h) - \kappa(g) \|_{L^2(\Omega)}  \le C \bigg( h^{-2} \|g_h-g\|_{L^2(\Omega)} + h^{-1} |g_h-g|_{H^1_h(\Omega)} \\ + \inf_{u_h \in V_h} \|\kappa(g) - u_h\|_{L^2(\Omega)} \bigg).
\end{split}
\end{equation}
  
Next, assume $\kappa(g) \in H^m(\Omega) \cap H^1_0(\Omega)$, let $0 \le k \le \min\{q+1,m\}$, and let $v_h \in V_h$ be the Scott-Zhang interpolant of $\kappa(g)$.  
Then, using standard inverse estimates and interpolation error estimates, we obtain
\begin{align*}
|  \kappa_h(g_h) & - \kappa(g) |_{H^k_h(\Omega)} 
\le |\kappa_h(g_h) - v_h|_{H^k_h(\Omega)} + |v_h - \kappa(g)|_{H^k_h(\Omega)} \\
&\le Ch^{-k}  \|\kappa_h(g_h) - v_h\|_{L^2(\Omega)} + |v_h - \kappa(g)|_{H^k_h(\Omega)} \\
&\le Ch^{-k} \left( \|\kappa_h(g_h) - \kappa(g)\|_{L^2(\Omega)} +  \|v_h - \kappa(g)\|_{L^2(\Omega)} \right) +  |v_h - \kappa(g)|_{H^k_h(\Omega)}. \\
&\le C \left( h^{-k} \left( \|\kappa_h(g_h) - \kappa(g)\|_{L^2(\Omega)} +  h^\ell|\kappa(g)|_{H^\ell(\Omega)} \right) +  h^{\ell-k} |\kappa(g)|_{H^\ell(\Omega)} \right), 
\end{align*}
for each $\ell=k,k+1,\dots,\min\{q+1,m\}$.
Combining this with the $L^2(\Omega)$-estimate~(\ref{L2est}) gives~(\ref{Hkest}).  This completes the proof of Theorem~\ref{thm:err}.

\section{Numerical Examples} \label{sec:numerical}

In this section, we present numerical experiments to illustrate the convergence rates predicted by Theorem~\ref{thm:err}.  We focus on the $L^2(\Omega)$-error, which, upon taking $k=0$ and $\ell=q+1$ in~(\ref{Hkest}), satisfies
\begin{equation} \label{concreteestimate}
\|\kappa_h(g_h)-\kappa(g)\|_{L^2(\Omega)} 
\le C (h^{r-1} |g|_{H^{r+1}(\Omega)} + h^{q+1} |\kappa(g)|_{H^{q+1}(\Omega)} ),
\end{equation}
assuming that $g \in H^{r+1}(\Omega) \otimes \mathbb{S}$, $\kappa(g) \in H^{q+1}(\Omega)$, and $g_h$ is taken equal to a suitable interpolant of $g$~\cite[Theorem 2.6]{li2018regge}.

\begin{figure} \label{fig:lattice}
\begin{center}
\begin{tikzpicture}[scale=2]
\draw (0,0) 
  -- (0,1) 
  -- (1,0) 
  -- cycle;
\end{tikzpicture}
\begin{tikzpicture}[scale=2]
\draw (0,0) 
  -- (0,1) 
  -- (1,0) 
  -- cycle;
\draw (0.5,0) 
  -- (0.5,0.5)
  -- (0,0.5) 
  -- cycle;
\end{tikzpicture}
\begin{tikzpicture}[scale=2]
\draw (0,0) 
  -- (0,1) 
  -- (1,0) 
  -- cycle;
\draw (0.333,0) 
  -- (0.333,0.333)
  -- (0,0.333) 
  -- cycle;
\draw (0.667,0) 
  -- (0.667,0.333)
  -- (0.333,0.333) 
  -- cycle;
\draw (0.333,0.333) 
  -- (0.333,0.667)
  -- (0,0.667) 
  -- cycle;
\end{tikzpicture}
\end{center}
\caption{Partition of $K$ for $r=0,1,2$.}
\end{figure}
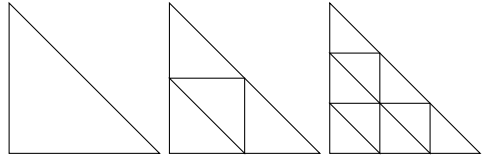

In our numerical implementation, we took $g_h$ equal to a subdivision-based interpolant of $g$ defined as follows.  For each $K \in \mathcal{T}_h$, let $K_1,K_2,\dots,K_{(r+1)^2}$ be the triangles formed by partitioning $K$ along the lines given in barycentric coordinates $(\lambda^0,\lambda^1,\lambda^2)$ by $\lambda^i = j/(r+1)$, $i=0,1,2$, $j=1,2,\dots,r$, as depicted in Figure~\ref{fig:lattice}.  
Let $\mathcal{E}_h^r(K)$ be the union of the edges of $K_1,K_2,\dots,K_{(r+1)^2}$.
Let $\mathcal{E}_h^r = \cup_{K \in \mathcal{T}_h} \mathcal{E}_h^r(K)$.  
For each $e \in \mathcal{E}_h^r$, let $z^{(e)} \in \Omega$ denote the midpoint of $e$.  It is known that the linear functionals
\[
\sigma \mapsto \tau^T \sigma(z^{(e)}) \tau, \quad e \in \mathcal{E}_h^r
\]
form a basis for the dual of $\Sigma_h$, where $\tau$ is a unit vector (relative to the Euclidean metric) tangent to $e$~\cite[pp. 38-39]{li2018regge}.  Let $\{\psi_e\}_{e \in \mathcal{E}_h^r} \subset \Sigma_h$ denote the basis for $\Sigma_h$ dual to these functionals:
\[
\tau^T \psi_e(z^{(e')}) \tau = 
\begin{cases}
1, &\mbox{ if } e = e', \\
0, &\mbox{ otherwise. }
\end{cases}
\]
We took
\begin{equation} \label{interpolant}
g_h = \sum_{e \in \mathcal{E}_h^r} \tau^T g(z^{(e)}) \tau \psi_e.
\end{equation}

\begin{table}[t]
\centering
\pgfplotstabletypeset[
clear infinite,
every head row/.style={before row=\midrule,after row=\midrule},
every last row/.style={after row=\midrule},
    every head row/.append style={
        before row=\toprule,
        before row/.add={}{%
        {} & {}%
        & \multicolumn{4}{c|}{$r=0$} %
        & \multicolumn{4}{c|}{$r=1$} %
        & \multicolumn{4}{c|}{$r=2$}\\ \midrule
        }},
create on use/rate1/.style={create col/gradient loglog={0}{1}},
create on use/rate2/.style={create col/gradient loglog={0}{2}},
create on use/rate3/.style={create col/gradient loglog={0}{3}},
columns={0,1,rate1,2,rate2,3,rate3},
columns/0/.style={dec sep align={|c|},sci,sci 10e,sci zerofill,precision=3,column type/.add={|}{|},column name={$h$}},
columns/1/.style={dec sep align={c|},sci,sci 10e,sci zerofill,precision=3,column type/.add={}{|},column name={Error}},
columns/2/.style={dec sep align={c|},sci,sci 10e,sci zerofill,precision=3,column type/.add={}{|},column name={Error}}, 
columns/3/.style={dec sep align={c|},sci,sci 10e,sci zerofill,precision=3,column type/.add={}{|},column name={Error}}, 
columns/rate1/.style={dec sep align={c|},fixed,zerofill,column type/.add={}{|},column name={Order}},
columns/rate2/.style={dec sep align={c|},fixed,zerofill,column type/.add={}{|},column name={Order}},
columns/rate3/.style={dec sep align={c|},fixed,zerofill,column type/.add={}{|},column name={Order}},
]
{convmore.dat}
\caption{Errors $\|\kappa_h(g_h)-\kappa(g)\|_{L^2(\Omega)}$ for the metric~(\ref{gexact}) on the square $(-1,1) \times (-1,1)$.}
\label{tab:conv}
\end{table}

Table~\ref{tab:conv} shows the errors $\|\kappa_h(g_h)-\kappa(g)\|_{L^2(\Omega)}$ for the metric~(\ref{gexact}) on $\Omega = (-1,1) \times (-1,1)$.  We computed the discrete Gaussian curvature using Definition~\ref{def:kappah} with $q=1$ and $r=0,1,2$ on triangulations with maximum element diameter $h \in [0.02,0.17]$.  We constructed each triangulation by randomly perturbing the interior vertices of a uniform triangulation of $\Omega$, as depicted in Figure~\ref{fig:kappa:a}.  Observe that the $L^2(\Omega)$-error converges linearly when $r=2$ and does not converge when $r<2$, in agreement with the estimate~(\ref{concreteestimate}).

\section*{Acknowledgements}

I am grateful to Melvin Leok for many helpful discussions.

\bibliography{references}

\end{document}